\newtheorem{theo}{Theorem}[section]
\newtheorem{lem}[theo]{Lemma}
\newtheorem{coro}[theo]{Corollary}
\newtheorem{prop}[theo]{Proposition}
\theoremstyle{definition}
\newtheorem{defi}[theo]{Definition}
\theoremstyle{remark}
\newtheorem{rem}[theo]{Remark}
\newtheorem{example}[theo]{Example}
\numberwithin{equation}{section}
\newcommand{\C}{{\mathbb C}}
\newcommand{\Z}{{\mathbb Z}}
\newcommand{\osp}{{\rm\mathfrak{osp}}}
\newcommand{\Sl}{{\rm\mathfrak{sl}}}
\newcommand{\g}{{\mathfrak g}}
\newcommand{\de}{\delta}
\newcommand{\ga}{\gamma}
\newcommand{\Q}{{\mathcal Q}}
\newcommand{\hQ}{{\widehat{\mathcal Q}}}
\newcommand{\cq}{{\mathcal{C}_q}}
\newcommand{\cqq}{{\mathfrak{C}_q}}
\newcommand{\Uq}{{{\rm  U}_q}}
\newcommand{\U}{{\rm  U }}
\newcommand{\cI}{{\mathcal I}}
\newcommand{\e}{\xi^+ }
\newcommand{\f}{\xi^-}
\newcommand{\ef}{\xi^{\pm}}
\newcommand{\qe}{e}
\newcommand{\qf}{f}
\newcommand{\x}{\xi'}
\newcommand{\h}{\kappa}
\newcommand{\qk}{\gamma}
\newcommand{\hh}{\hat{\kappa}}
\newcommand{\X}{X}
\newcommand{\HH}{H}
\newcommand{\ka}{\mathfrak{k}}
\newcommand{\ta}{\mathfrak{t}}
\newcommand{\xp}{{\rm {exp}}}
\newcommand{\up}{\Upsilon}
\newcommand{\fU}{{\mathfrak U}}
\begin{document}

\title[Vertex operator representations]{Vertex operator representations of  \\ quantum affine superalgebras}

\author{Ying Xu}
\author{R. B. Zhang}
\address[Xu]{School of Mathematics, Hefei University of Technology, Hefei, China}
\address[Xu, Zhang]{School of Mathematics and Statistics,
University of Sydney, NSW 2006, Australia}
\email{xuying@hfut.edu.cn, ruibin.zhang@sydney.edu.au}

\begin{abstract}
Let $\Uq(\g)$ be the quantum affine superalgebra associated with an affine Kac-Moody superalgebra $\g$ which belongs to the three series $\osp(1|2n)^{(1)}$, $\Sl(1|2n)^{(2)}$ and $\osp(2|2n)^{(2)}$. We develop vertex operator constructions for the level $1$ irreducible integrable highest weight representations and classify the finite dimensional irreducible representations of $\Uq(\g)$.  This makes essential use of the Drinfeld realisation for $\Uq(\g)$, and quantum correspondences between affine Kac-Moody superalgebras, developed in earlier papers.
\end{abstract}
\subjclass[2010]{17B37,17B69}
\keywords{affine superalgebras;quantum affine superalgebras;Drinfeld realisation;Vertex operator representations;finite dimensional irreducible representations}

\maketitle


\section{Introduction}\label{intro}
Quantum supergroups and quantum affine superalgebras were introduced in the early 90s  \cite{BGZ, Y94, Y99}, which have important applications in a number of areas such as low dimensional topology \cite{LGZ, Z92a, Z95} and statistical mechanics \cite{BGZ, ZBG91}.
There was extensive work in the 90s on the representation theory of quantum supergroups associated with finite dimensional simple Lie superalgebras \cite{BKK, PSV, Zhc, Z92b, Z93, Z93b, Zo}, and the $\mathfrak{gl}(m|n)$ super Yangian \cite{Z96} (see \cite{Z98} for a review of early results).  Representations of some classes of quantum affine superalgebras were also studied: the integrable highest weight representations were well understood \cite{Z2} for the quantum affine superalgebras associated with the affine Lie superalgebras without isotropic real roots; the finite dimensional irreducible representations and evaluation representations of the untwisted quantum affine superalgebra $\Uq(\mathfrak{gl}(m|n)^{(1)})$ were thoroughly treated in recent years \cite{WZ,Zh}. We mention in particular  that finite dimensional representations of the quantum supergroup $\Uq(\mathfrak{gl}(m|n))$ all lift to evaluation representations of $\Uq(\mathfrak{gl}(m|n)^{(1)})$,
a fact which has long been known \cite{Z92c}.

One of the problems hindering progress in the study of quantum affine superalgebras was the lack of Drinfeld realisations \cite{De, Dr} except for the case of  $\Uq(\mathfrak{gl}(m|n)^{(1)})$ \cite{Y99}. This is rectified \cite{XZ1} recently for the quantum affine superalgebras
$\Uq(\g)$ associated with the
affine Kac-Moody superalgebras $\g$ belonging to the three series given in \eqref{eq:g}.
The Drinfeld realisations of the quantum affine superalgebras will be used in an essential way in this paper.

Let $\g$ be an affine Kac-Moody superalgebra given in \eqref{eq:g}.
We will construct vertex operator representations
and classify the finite dimensional irreducible representations of $\Uq(\g)$.
The main results  are given in Theorem \ref{them:v.o} (and its variations in Sections \ref{sect:other-level-1}  and \ref{sect:other-vacuum}) and Theorem \ref{theo-finite module}.
Finite dimensional representations of quantum affine superalgebras play a crucial role in constructing soluble models of Yang-Baxter type \cite{BGZ}; vertex  operator representations have a direct connection with conformal field theory. Results of this paper are potentially applicable to mathematical physics.

The vertex operator representations of $\Uq(\g)$ constructed here are realised on quantum Fock spaces; they are level $1$ irreducible integrable highest weight representations relative to the standard triangular decomposition. Recall that there exists a well defined notion of
integrable highest weight representations \cite{Z2} for $\Uq(\g)$ with $\g$ belonging to \eqref{eq:g},  even though this is not true for most of the other quantum affine superalgebras.
We point out that our construction here is heavily influenced by work of Jing \cite{Jn1, JnM} on  the vertex operator representations of ordinary quantum affine algebras.

The finite dimensional irreducible representations of $\Uq(\g)$ are shown to be level $0$ highest weight representations relative to another triangular decomposition.
We obtain the necessary and sufficient conditions on the highest weights for
irreducible highest weight representations to be finite dimensional. The conditions are described in terms of Drinfeld’s highest weight polynomials.  The proof of the classification theorem (Theorem \ref{theo-finite module}) makes essential use of results of Chari and Pressley in \cite{CP0, CP1, CP2} on ordinary quantum affine algebras. Another important ingredient in the proof is quantum correspondences between affine Lie superalgebras developed in \cite{XZ, Z2}.  Some of the quantum correspondences appear as S-dualities in string theory in
work of Mikhaylov and Witten \cite{MW}.

\section{Drinfeld realisation of quantum affine superalgebras}\label{quantum}
Consider the following affine Kac-Moody superalgebras
\begin{eqnarray}\label{eq:g}
\osp(1|2n)^{(1)},  \quad \Sl(1|2n)^{(2)}, \quad \osp(2|2n)^{(2)}, \quad n\ge 1.
\end{eqnarray}
Here the notation is as in \cite{K78}, with $\osp(1|2n)^{(1)}$ denoting the untwisted affine  Lie superalgebra of $\osp(1|2n)$,  and $\osp(2|2n)^{(2)}$ and $\Sl(1|2n)^{(2)}$ the
twisted (by order two automorphisms) affine Lie superalgebras of $\osp(2|2n)$ and $\Sl(1|2n)$ respectively.
The Dynkin diagrams of the affine Lie superalgebras are as follows.
\[
\begin{picture}(120, 28)(40,-12)
\put(-42,-5) {$\osp(1|2n)^{(1)}$}
\put(37,0){\circle{10}}
\put(35,-12){\tiny\mbox{$\alpha_0$}}
\put(43,1){\line(1, 0){17}}
\put(43,-1){\line(1, 0){17}}
\put(58,-3){$>$}
\put(70, 0){\circle{10}}
\put(68,-12){\tiny\mbox{$\alpha_1$}}
\put(75, 0){\line(1, 0){16}}
\put(91, -0.5){\dots}
\put(105, 0){\line(1, 0){18}}
\put(128, 0){\circle{10}}
\put(132,1){\line(1, 0){17}}
\put(132,-1){\line(1, 0){17}}
\put(143,-3){$>$}
\put(155, 0){\circle*{10}}
\put(150,-12){\mbox{\tiny$\alpha_{n}$}}
\end{picture}
\]
\[
\begin{picture}(150, 60)(-20,-28)
\put(-85,-5) {$\Sl(1|2n)^{(2)}$}
\put(0, 15){\circle{10}}
\put(-5,23){\tiny$\alpha_0$}
\put(0, -16){\circle{10}}
\put(-4,-28){\tiny$\alpha_1$}
\put(15, -3){\line(-1, -1){10}}
\put(15, 3){\line(-1, 1){10}}
\put(20, 0){\circle{10}}
\put(24, 0){\line(1, 0){20}}
\put(46, -0.5){\dots}
\put(60,0){\line(1, 0){18}}
\put(83, 0){\circle{10}}
\put(88,1){\line(1, 0){17}}
\put(88,-1){\line(1, 0){17}}
\put(100,-3){$>$}
\put(112, 0){\circle*{10}}
\put(106,-15){\tiny$\alpha_{n}$}
\end{picture}
\]
\[
\begin{picture}(150, 30)(-10,-14)
\put(-75,-5){$\osp(2|2n)^{(2)}$}
\put(7,0){\circle*{10}}
\put(3,-12){\tiny $\alpha_0$}
\put(12,1){\line(1, 0){18}}
\put(12,-1){\line(1, 0){18}}
\put(10,-3){$<$}
\put(35, 0){\circle{10}}
\put(40, 0){\line(1, 0){20}}
\put(61, -0.5){\dots}
\put(75, 0){\line(1, 0){18}}
\put(98, 0){\circle{10}}
\put(103,1){\line(1, 0){17}}
\put(103,-1){\line(1, 0){17}}
\put(113,-3){$>$}
\put(125, 0){\circle*{10}}
\put(115,-12){ \tiny$\alpha_{n}$}
\end{picture}
\]
More details on their root systems can be found in \cite{K78} (also see \cite{XZ, Z2}).
Note in particular that these affine Lie superalgebras do not have isotropic odd roots.

Let $\g$ be an affine Lie superalgebra in \eqref{eq:g}.
We denote by
$A=(a_{ij})$ its Cartan matrix, which is realised in terms of the set of simple roots
$\Pi=\{\alpha_i \mid i=0, 1, 2, \dots, n\}$
with $a_{i j} = \frac{2(\alpha_i, \alpha_j)}{(\alpha_i, \alpha_i)}$. A simple root
$\alpha_i$ is odd if the corresponding node in the Dynkin diagram is black, and is even otherwise.
%
%

Let $q^{1/2}$ be an indeterminate, and let $\C(q^{1/2})$
be the field of rational functions in $q^{1/2}$. Denote
$[k]_z=\frac{z^k-z^{-k}}{z-z^{-1}}$, \
$[N]_z!=\prod_{i=1}^N[i]_z$ with $[0]_z!=1$,
and
$\begin{bmatrix} N\\k\end{bmatrix}_z=\frac{[N]_z!}{[N-k]_z![k]_z!}$.
Set $q_i=q^{\frac{(\alpha_i,\alpha_i)}{2}}$ for all $\alpha_i\in\Pi$.

\begin{defi}[\cite{Z2}]\label{defi:quantum-super}
Assume that $\g$ is one of the affine Lie superalgebras $\osp(1|2n)^{(1)}$, $\Sl(1|2n)^{(2)}$ and $\osp(2|2n)^{(2)}$.
The quantum affine  superalgebra $\Uq(\g)$  is an associative superalgebra over $\C(q^{1/2})$ with identity generated by the homogeneous
elements $\qe_i,\qf_i, k_i^{\pm1/2}$ ($0\le i \le n$), where $\qe_j,\qf_j$ are odd for odd simple roots $\alpha_j$, and the other generators are even, with the following defining relations:
\begin{eqnarray}
\nonumber
&& k_i^{\pm1/2}  k_i^{\mp1/2}= k_i^{\mp1/2}  k_i^{\pm1/2}=1,\quad  k_i^{1/2}  k_j^{1/2}= k_j^{1/2}  k_i^{1/2},\\
\nonumber
&&     k_i^{\pm1/2} \qe_j  k_i^{\mp1/2} = q_i^{a_{ij}/2}  \qe_j,
\quad  k_i^{\pm1/2} \qf_j   k_i^{\mp1/2} = q_i^{-a_{ij}/2} \qf_j,\\
\label{eq:xx-q}
&&\qe_i\qf_j     -    (-1)^{ [\qe_i][\qf_j] } \qf_j\qe_i
                   =\de_{ij}  \dfrac{  k_i- k_i^{-1} }
                                          { q_i-q_i^{-1} }, \quad \forall i, j, \\
\nonumber
&&\left(
            \mbox{Ad}_{\qe_i}    \right)^{1-a_{ij}}    (\qe_j)
    =\left(
             \mbox{Ad}_{\qf_i}    \right)^{1-a_{ij}}    (\qf_j)=0, \quad       \text{ if } i\neq j,
\end{eqnarray}
where $k_i^{\pm}=\left(k_i^{\pm1/2}\right)^2$, $\mbox{Ad}_{\qe_i}(x)$ and $\mbox{Ad}_{\qf_i}(x)$ are respectively defined by
\begin{equation}\label{eq:ad}
\begin{aligned}
\mbox{Ad}_{\qe_i}(x)     =         \qe_ix   -(-1)^{[\qe_i][x]}   k_i x  k_i^{-1} \qe_i,\\
\mbox{Ad}_{\qf_i}(x)      =         \qf_ix    -(-1)^{[\qf_i][x]}    k_i^{-1} x  k_i \qf_i.
\end{aligned}
\end{equation}
\end{defi}

For any $x, y\in \Uq(\g)$ and $a\in\C(q^{1/2})$, we shall write
\[
[x, y]_a=x y - (-1)^{[x][y]} a y x, \quad [x, y] = [x, y]_1.
\]
Then $\mbox{Ad}_{\qe_i}(\qe_j)=[\qe_i, \qe_j]_{q_i^{a_{ij}}}$ and
$\mbox{Ad}_{\qf_i}(\qf_j)=[\qf_i, \qf_j]_{q_i^{a_{ij}}}$.

\subsection{Drinfeld realisations}
In a recent paper \cite{XZ1},  we constructed the Drinfeld realisations
of the quantum affine superalgebras $\U_q(\g)$.
To describe the Drinfeld realisations,
we let $\cI=\{ (i,r)\mid 1\le i\le n,  \ r\in\Z \}$.  Define the set $\cI_\g$ by
$\cI_{\g}:=\cI$
if $\g=\osp(1|2n)^{(1)}$ or $\Sl(1|2n)^{(2)}$; and
$\cI_{\g}:=\cI\backslash \{ (i,2r+1)\mid 1\le i<n, \  r\in \Z\}$
if  $\g=\osp(2|2n)^{(2)}$.
Let $ \mathcal{I}_{\g}^* =\{(i, s)\in \mathcal{I}_{\g}\mid s\ne 0\}$.
Also, for any expression $f(x_{r_1},\dots,x_{r_k})$ in $x_{r_1},\dots,x_{r_k}$, we use $sym_{r_1,\dots,r_k}f(x_{r_1},\dots,x_{r_k})$ to denote $\sum_{\sigma}f(x_{\sigma(r_1)},\dots,x_{\sigma(r_k)})$, where the sum is over the permutation group of the set  $\{r_1, r_2, \dots, r_k\}$.

\begin{prop}[Drinfeld realisation \cite{XZ1}]  \label{prop:dr}
For $\g=\osp(1|2n)^{(1)}$, $\Sl(1|2n)^{(2)}$ or $\osp(2|2n)^{(2)}$, the quantum affine superalgebra $\U_q(\g)$ is generated by
\begin{align}\label{eq:generator}
\ef_{i,r}, \  \h_{i,s}, \ \qk_i^{\pm1/2}, \ \ga^{\pm 1/2}, \quad \text{for }\  (i,r)\in\mathcal{I}_{\g},   \   (i,s)\in\mathcal{I}_{\g}^*, \ 1\le i \le n,
\end{align}
where $\e_{n,r},\f_{n,r}$ are odd and the other generators are even, with the following defining relations
\begin{itemize}
\item[\rm(1)]  \hspace{8mm} $\ga^{\pm 1/2}$ are central, and $\ga^{1/2} \ga^{- 1/2}=1$,
\begin{align}
&\qk_i^{\pm1/2}\qk_i^{\mp1/2}=\qk_i^{\mp1/2}\qk_i^{\pm1/2}=1,\quad  \qk_i^{1/2}\qk_j^{1/2}=\qk_j^{1/2}\qk_i^{1/2}, \nonumber\\
 \label{eq:hh}
&[\h_{i,r},\h_{j,s}]=\delta_{r+s,0} \dfrac{ u_{i,j,r} (\ga^{r}-\ga^{-r})  }
                                                           { r   (q_i-q_i^{-1})(q_j-q_j^{-1})  }  ,\\
\label{eq:hx}
& \qk_i^{\pm1/2} \ef_{j,r} \qk_i^{\mp1/2}=q_i^{\pm a_{ij}/2} \ef_{j,r},\quad
 [\h_{i,r},\ef_{j,s}]  = \dfrac{  u_{i,j,r} \ga^{\mp|r|/2}  }
                                                   {   r(q_i-q_i^{-1})  }
                                                    \ef_{j,s+r},  \\
 \label{eq:xx}
& [\e_{i,r}, \f_{j,s}]  =\delta_{i,j}
                                \dfrac{   \ga^{\frac{r-s}{2}} \hh^{+}_{i,r+s}
                                          -  \ga^{\frac{s-r}{2}} \hh^{-}_{i,r+s}   }
                                         {  q_i-q_i^{-1}  },
\end{align}
where the $u_{i,j,r}$ are given in \eqref{eq:u-def};  and $\hh^{\pm}_{i,\pm r}$ are defined by
\begin{equation}\label{eq:hh-hat}
\begin{aligned}
&\sum_{r\in\Z}  \hh^{+}_{i,r}u^{-r} =\qk_i \xp  \left(
                                  (q_i-q_i^{-1})\sum_{r>0}\h_{i, r}u^{-r}
                                                               \right),\\
&\sum_{r\in\Z}  \hh^{-}_{i,-r}u^r    =\qk_i^{-1}  \xp  \left(
                                 (q_i^{-1}-q_i)\sum_{r>0}\h_{i,-r}u^r
                                                                \right),
\end{aligned}
\end{equation}
in which $\qk_i^{\pm1}=\left(\qk_i^{\pm1/2}\right)^2$;

\item[\rm(2)] {\rm Serre relations}

\begin{itemize}
\item[\rm (A)] $(i,j)\neq (n, n)$, and if $\g\neq \osp(1|2n)^{(1)}$,
 \begin{align}
\label{eq:xrs-xsr}
&[\ef_{i,r\pm \theta}, \ef_{j,s}]_{q^{a_{ij}}_{i}}+[ \ef_{j,s\pm \theta},\ef_{i,r}]_{q^{a_{ji}}_{j}}
     =0,
\end{align}
where $\theta=2$ if $\g=\osp(2|2n)^{(2)}$ and $1$ if $\g=\Sl(1|2n)^{(2)}$;

\item[\rm (B)] $n\ne i\neq j$, \ $\ell=1-a_{i j}$,
\[\begin{aligned}
\hspace{20mm}
sym_{r_1,\dots,r_\ell}\sum_{k=0}^\ell  (-1)^k
                                             \begin{bmatrix} \ell\\k\end{bmatrix}_{q_i}
                                              \ef_{i,r_1}\dots\ef_{i,r_k} \ef_{j,s}\ef_{i,r_k+1}\dots\ef_{i,r_\ell}=0;
\end{aligned}\]

\item[\rm (C)]  $n=i\ne j$, \ $\ell=1-a_{i j}$, and if $\g\ne \Sl(1|2n)^{(2)}$, $j<n-1$,
\[\begin{aligned}
\hspace{20mm}
sym_{r_1,\dots,r_\ell}\sum_{k=0}^\ell
                                              \begin{bmatrix} \ell\\k\end{bmatrix}_{\tilde{q_i}}
                                              \ef_{i,r_1}\dots\ef_{i,r_k} \ef_{j,s}\ef_{i,r_k+1}\dots\ef_{i,r_\ell}=0,
\end{aligned}\]
where $\tilde{q_i}=(-1)^{1/2}q_i$;

\item[\rm (D)] for $\g=\osp(1|2n)^{(1)}$,
 \begin{align*}
&sym_{r_1,r_2,r_3} [  [\ef_{n,r_1\pm 1},\ef_{n,r_2}]_{q_n^2},  \ef_{n,r_3}]_{q_n^4}=0;\\
&sym_{r,s}\Big([\ef_{n,r\pm 2}, \ef_{n,s}]_{q_n^2} -q_n^4 [\ef_{n,r\pm 1},  \ef_{n,s\pm 1} ]_{q_n^{-6}}\Big)=0;\\
&sym_{r,s}\Big(q_n^2[[\ef_{n,r\pm1},\ef_{n,s}]_{q_n^2},\ef_{n-1,k}]_{q_n^4}\\
&+(q_n^2+q_n^{-2})[[\ef_{n-1,k},\ef_{n,r\pm1}]_{q_n^2},\ef_{n,s}]\Big)=0;
\end{align*}

\item[\rm (E) ] for $\g=\osp(2|2n)^{(2)}$,
\[
sym_{r, s} [  [\ef_{n-1 ,k},\ef_{n, r\pm 1}]_{q_n^2},  \ef_{n, s}]=0.
\]
\end{itemize}
\end{itemize}
In the above, the scalars $u_{i,j,r}$ ($r\in\Z$, $i, j=1, 2, \dots, n$)  are defined by
\begin{eqnarray}\label{eq:u-def}
\begin{aligned}
&\osp(1|2n)^{(1)}: \quad u_{i,j,r}=\begin{cases}
 q_n^{4r}-q_n^{-4r}-q_n^{2r}+q_n^{-2r},   & \text{if } i=j=n,\\
 q_i^{r a_{ij}}- q_i^{-r a_{ij}},                             & \text {otherwise };
            \end{cases}\\
&\osp(2|2n)^{(2)}: \quad u_{i,j,r}=\begin{cases}
  (-1)^r(q_n^{2r}-q_n^{-2r}),                 & \text{if }  i=j=n,  \\
(1+(-1)^r)(q_i^{r a_{ij}/2}-q_i^{-r a_{ij}/2}),        & \text {otherwise };
                  \end{cases}\\
&\Sl(1|2n)^{(2)}: \quad \phantom{X}  u_{i,j,r}=\begin{cases}
(-1)^r(q_n^{2r}-q_n^{-2r}),                   & \text{if }  i=j=n,  \\
q_i^{r a_{ij}}- q_i^{-r a_{ij}},        & \text {otherwise }.
                 \end{cases}
\end{aligned}
\end{eqnarray}
\end{prop}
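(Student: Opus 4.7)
The plan is to construct an explicit isomorphism between the Drinfeld--Jimbo presentation of $\Uq(\g)$ in Definition \ref{defi:quantum-super} and the loop presentation in Proposition \ref{prop:dr}. I would build a homomorphism $\Phi$ from the algebra defined by the generators \eqref{eq:generator} and relations (1)--(2) into $\Uq(\g)$, and then establish bijectivity by combining a surjectivity argument with a PBW comparison.

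The key step in defining $\Phi$ is to adapt Lusztig's braid group action to the super setting: for each $0\le i\le n$, one wants a $\C(q^{1/2})$-algebra automorphism $T_i$ of $\Uq(\g)$ satisfying the appropriate super-braid relations, with careful sign corrections at the odd node $\alpha_n$. Fixing a reduced expression for a translation in the extended affine Weyl group in the direction of a finite simple root $\alpha_i$, the real root vectors $\e_{i,r}$ and $\f_{i,r}$ are then defined as iterated $T$-shifts of the Chevalley generators $\qe_i$ and $\qf_i$. The central element $\ga^{\pm 1/2}$ corresponds to the Cartan element attached to the null root $\delta$, and the Cartan currents $\h_{i,r}$ are extracted from the commutator \eqref{eq:xx}, with the elements $\hh^{\pm}_{i,r}$ defined by the generating-function identities \eqref{eq:hh-hat}.

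The hardest part will be verifying the Serre-type relations in item (2) of the proposition. Relation (A) involves a twisted spectral shift $\theta$, and relations (D) for $\osp(1|2n)^{(1)}$ and (E) for $\osp(2|2n)^{(2)}$ require higher-order symmetrised brackets reflecting the short odd root at node $n$. The scalars $u_{i,j,r}$ in \eqref{eq:u-def} must emerge precisely from these braid-group computations: the alternating factors $(-1)^r$ in the twisted cases $\osp(2|2n)^{(2)}$ and $\Sl(1|2n)^{(2)}$ encode the order-two diagram automorphism defining the twist, while the richer formula for $u_{n,n,r}$ in the $\osp(1|2n)^{(1)}$ case stems from $2\alpha_n$ being a long even root. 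Consistently tracking super signs through every symmetrisation is the principal technical challenge.

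Finally, surjectivity of $\Phi$ follows by expressing the missing Chevalley--Serre generators $\qe_0, \qf_0, k_0^{\pm 1/2}$ as iterated brackets of the $\ef_{i,\pm 1}$ with suitable $\h_{i,\pm 1}$, which places a full set of Chevalley--Serre generators in the image. Injectivity then follows from a PBW comparison: one constructs an ordered monomial basis for the loop-generator algebra, and shows that its graded character matches that of $\Uq(\g)$, the latter being controlled via the quantum correspondences developed in \cite{XZ, Z2}.
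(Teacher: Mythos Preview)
The paper does not contain a proof of this proposition: it is stated as a result imported from the companion paper \cite{XZ1}, with no argument given here beyond the citation and the subsequent remark that the relations are super analogues of those in \cite{De}. So there is nothing in the present paper to compare your proposal against.

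That said, your outline is a reasonable sketch of the standard Beck--Damiani strategy (braid group operators, real root vectors, extraction of imaginary root vectors from \eqref{eq:xx}, PBW comparison), and it is consistent with what one would expect the proof in \cite{XZ1} to look like. If you intend to supply an actual proof rather than cite \cite{XZ1}, be aware that the genuinely delicate points are exactly the ones you flag: the existence and braid relations of the $T_i$ at the odd short node, and the verification of the exotic Serre relations (D) and (E). Your proposal does not yet indicate how you would carry out those computations, so as written it is a plan rather than a proof.
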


\begin{rem}
The defining relations given above are super analogues of those  in the Drinfeld realisations of quantum affine algebras given in \cite{De}.  The relations in  \cite{De} have slightly different form, but are equivalent to the original relations given by Drinfeld \cite{Dr}. They are more convenient to use for proving the equivalence of the Drinfeld-Jimbo presentation and Drinfeld realisation.
\end{rem}

For the purpose of studying vertex operator representations, it is more convenient to present the Drinfeld realisation in terms of
currents.  For this, we will need the calculus of formal distributions familiar in the theory of vertex operators algebras.  Particularly useful is the formal distribution
$\delta(z)=\sum_{r\in\Z}z^r$, which has the following property: for any formal distribution $f(z,w)$ in the two variables $z$ and $w$, we have
$f(z,w)\delta(\frac{w}{z})=f(z,z)\delta(\frac{w}{z})$. A detailed treatment of formal distributions can be found in, e.g., \cite{K98}.

Given any pair of simple roots $\alpha_i$ and $\alpha_j$ of $\g$, we let
\begin{align}\label{eq-g}
g_{ij}(z)=\sum_{n\ge 0} g_{ij,n}z^n,
\end{align}
be the Taylor series expansion at $z=0$ of $f_{ij}(z)/h_{ij}(z)$,
where
\begin{eqnarray*}
&\osp(1|2n)^{(1)}: &f_{ij}(z)=\begin{cases}
(q^{2(\alpha_i,\alpha_j)}z-1)(q^{-(\alpha_i,\alpha_j)}z-1), & i=j=n,\\
q^{(\alpha_i,\alpha_j)}z-1, & otherwise;
                                          \end{cases}\\
&                           &h_{ij}(z)=\begin{cases}
(z-q^{2(\alpha_i,\alpha_j)})(z-q^{-(\alpha_i,\alpha_j)}),&i=j=n,\\
z-q^{(\alpha_i,\alpha_j)},& otherwise;
                                           \end{cases}\\
&\osp(2|2n)^{(2)}:&f_{ij}(z)=\begin{cases}
(-q)^{(\alpha_i,\alpha_j)}z-1, & i=j=n,\\
\left(q^{(\alpha_i,\alpha_j)/2}z-1\right)\left ((-q)^{(\alpha_i,\alpha_j)/2}z-1\right), & otherwise;
                                          \end{cases}\\
&                           &h_{ij}(z)=\begin{cases}
z-(-q)^{(\alpha_i,\alpha_j)},&i=j=n,\\
\left(z-q^{(\alpha_i,\alpha_j)/2}\right)\left(z-(-q)^{(\alpha_i,\alpha_j)/2}\right),& otherwise;
                                           \end{cases}\\
&\Sl(1|2n)^{(2)}: &f_{ij}(z)=\begin{cases}
(-q)^{(\alpha_i,\alpha_j)}z-1, & i=j=n,\\
q^{(\alpha_i,\alpha_j)}z-1, & otherwise;
                                          \end{cases}\\
&                           &h_{ij}(z)=\begin{cases}
z-(-q)^{(\alpha_i,\alpha_j)},&i=j=n,\\
z-q^{(\alpha_i,\alpha_j)},& otherwise.
                                           \end{cases}
\end{eqnarray*}

Now we introduce the following formal distributions in $\Uq(\g)[[z^{1/2}, z^{-1/2}]]$ for $\g=\osp(1|2n)^{(1)}$ and $\Uq(\g)[[z, z^{-1}]]$ for $\g=\Sl(1|2n)^{(2)},\osp(2|2n)^{(2)}$,
\begin{align*}
&\e_i(z)=\begin{cases}
\sum_{r\in\Z}\e_{i,r}z^{-r+1/2}, & \g=\osp(1|2n)^{(1)},i=n;\\
\sum_{r\in\Z}\e_{i,r}z^{-r},& \text{otherwise},
\end{cases}\\
&\f_i(z)=\begin{cases}
\sum_{r\in\Z}\f_{i,r}z^{-r-1/2},& \g=\osp(1|2n)^{(1)},i=n;\\
\sum_{r\in\Z}\f_{i,r}z^{-r},& \text{otherwise},
\end{cases}
\\
&\psi_i(z)=\sum_{r\in\Z_{\ge 0}}\hh^{+}_{i,r}z^{-r},\quad \varphi_i(z)=\sum_{r\in\Z_{\le 0}}\hh^{-}_{i,r}z^{-r}.
\end{align*}

\begin{lem}\label{lem:dr-f} Let
$\g=\osp(1|2n)^{(1)}$, $\Sl(1|2n)^{(2)}$ or $\osp(2|2n)^{(2)}$. Then $\Uq(\g)$ has the following presentation. The generators are
\[
\ef_{i,r},\hh_{i,r}^{\pm}, \ \ga^{\pm 1/2},
\quad \text{for }\  (i,r)\in\mathcal{I}_{\g};
\]
the relations in terms of formal distributions are given by:
\begin{item}
\item[\rm (1)]  $\ga^{\pm 1/2}$ are central with $\ga^{1/2} \ga^{- 1/2}=1$,
\begin{align}
&\hh^{+}_{i,0}\hh^{-}_{i,0}= \hh^{-}_{i,0}\hh^{+}_{i,0}=1,
[\varphi_i(z), \psi_j(w)]=[\psi_j(w),\varphi_i(z)]=0,\\
& \varphi_i(z) \psi_j(w) \varphi_i(z)^{-1} \psi_j(w)^{-1}=g_{ij}(zw^{-1}\ga^{-1})/g_{ij}(zw^{-1}\ga),\\
& \varphi_i(z) \ef_j(w) \varphi_i(z)^{-1}=g_{ij}(zw^{-1}\ga^{\mp1/2})^{\pm1}\ef_j(w),\\
&\psi_i(z) \ef_j(w) \psi_i(z)^{-1}=g_{ij}(z^{-1}w\ga^{\mp1/2})^{\mp1}\ef_j(w),\\
&[\e_i(z), \f_j(w)]=
   \frac{\rho_{z,w}\delta_{i,j}}{q_i-q_i^{-1}}
    \left (\psi_i(z\ga^{-1/2}) \delta\left( \frac{z\ga^{-1}}{w}\right)
      -\varphi_i(z\ga^{1/2}) \delta\left(\frac{z\ga}{w}\right) \right),\label{eq:xx-f}
\end{align}
where $g_{ij}$ are defined by \eqref{eq-g}, and
$\rho_{z,w}=(z/w)^{1/2}$ if $\g=\osp(1|2n)^{(1)}$ and $i=n$,  and $\rho_{z,w}=1$ otherwise.

\item[\rm (2)] Serre relations

\begin{itemize}
\item[\rm (A)] $(i,j)\neq (n, n)$, and if $\g\neq \osp(1|2n)^{(1)}$,
 \begin{align}
\label{eq:xrs-xsr-f}
&[z^{\pm\theta}\ef_{i}(z),\ef_{j}(w)]_{q_{i}^{a_{ij}}}+[w^{\pm\theta}\ef_{j}(w),\ef_{i}(z)]_{q_{j}^{a_{ji}}}=0,
\end{align}
where $\theta=2$ if $\g=\osp(2|2n)^{(2)}$,  and $1$ if $\g=\Sl(1|2n)^{(2)}$;

\item[\rm (B)] $n\ne i\neq j$, \ $\ell=1-a_{i j}$,
\[\begin{aligned}
\hspace{10mm}
sym_{z_1,\dots,z_\ell}\sum_{k=0}^\ell  (-1)^k
                                             \begin{bmatrix} \ell\\k\end{bmatrix}_{q_i}
                                              \ef_{i}(z_1)\dots\ef_{i}(z_k) \ef_{j}(w)\ef_{i}(z_{k+1})\dots\ef_{i}(z_{\ell})=0;
\end{aligned}\]

\item[\rm (C)]  $n=i\ne j$, \ $\ell=1-a_{i j}$, and if $\g\ne \Sl(1|2n)^{(2)}$, $j<n-1$,
\[\begin{aligned}
\hspace{10mm}
sym_{z_1,\dots,z_\ell}\sum_{k=0}^\ell
                                              \begin{bmatrix} \ell\\k\end{bmatrix}_{\tilde{q_i}}
                                              \ef_{i}(z_1)\dots\ef_{i}(z_k) \ef_{j}(w)\ef_{i}(z_{k+1})\dots\ef_{i}(z_\ell)=0,
\end{aligned}\]
where $\tilde{q_i}=(-1)^{1/2}q_i$;

\item[\rm (D)] for $\g=\osp(1|2n)^{(1)}$,
 \begin{align*}
&sym_{z_1,z_2,z_3} \left[  [z_1^{\pm}\ef_{n}(z_1),\ef_{n}(z_2)]_{q_n^2},  \ef_{n}(z_3)\right]_{q_n^4}=0;\\
&sym_{z,w}\Big([z^{\pm 2}\ef_{n}(z), \ef_{n}(w)]_{q_n^2} -q_n^4 [z^{\pm}\ef_{n}(z),  w^{\pm 1}\ef_{n}(w) ]_{q_n^{-6}}\Big)=0;\\
&sym_{z_1,z_2}\Big(q_n^2\left[[z_1^{\pm}\ef_{n}(z_1),\ef_{n}(z_2)]_{q_n^2},\ef_{n-1}(w)\right]_{q_n^4}\\
&+(q_n^2+q_n^{-2})\left[[\ef_{n-1}(w),z_1^{\pm}\ef_{n}(z_1)]_{q_n^2},\ef_{n}(z_2)\right]\Big)=0;
\end{align*}

\item[\rm (E)] for $\g=\osp(2|2n)^{(2)}$,
\[
sym_{z_1,z_2} \left[  [\ef_{n-1}(w),z_1^{\pm}\ef_{n}(z_1)]_{q_n^2},  \ef_{n}(z_2)\right]=0.
\]
\end{itemize}
\end{item}
\end{lem}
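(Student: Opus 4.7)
The strategy is straightforward in principle: repackage the mode-wise relations of Proposition \ref{prop:dr} into identities between formal power series by multiplying each mode equation by the appropriate monomial in the formal variables and summing. The nontrivial content lies in verifying that the exponential generating functions defining $\psi_i(z)$ and $\varphi_i(z)$ via \eqref{eq:hh-hat} turn the additive coefficients $u_{i,j,r}$ into the rational function $g_{ij}$ of \eqref{eq-g}, case by case.

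First I would handle the Serre relations (2)(A)--(E) and the $k$--$\ef$ relations, which are immediate. Multiplying \eqref{eq:xrs-xsr} by $z^{-r}w^{-s}$ and summing over $r,s$ produces \eqref{eq:xrs-xsr-f} directly, with the operator $z^{\pm\theta}$ simply realising the shift $r\mapsto r\pm\theta$; for the case $\g=\osp(1|2n)^{(1)}$, $i=n$ one must remember the half-integer shift built into $\e_n(z), \f_n(z)$, but this only rescales by $z^{1/2}w^{1/2}$ and drops out. The $q$-Serre relations in (B), (C), and the higher-order relations (D), (E) are handled identically, since each term is a sum over the same monomial in all mode variables. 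The $hx$ relation $\qk_i^{\pm1/2}\ef_j(w)\qk_i^{\mp1/2}=q_i^{\pm a_{ij}/2}\ef_j(w)$ is the zeroth-mode part of the $\psi_i,\varphi_i$ action on $\ef_j(w)$ and is just the first equation of \eqref{eq:hx}.

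The heart of the argument is the commutation with $\psi_i(z)$ and $\varphi_i(z)$. Starting from the second equation of \eqref{eq:hx}, I would substitute the exponential generating series for $\psi_i(z)$:
\[
\psi_i(z)\ef_j(w)\psi_i(z)^{-1}=\xp\!\left(\sum_{r>0}\frac{u_{i,j,r}\ga^{-r/2}}{r}\,z^{-r}w^r\right)\ef_j(w)\cdot(\text{zero-mode factor}).
\]
The crucial computation is the identity
\[
\xp\!\left(\sum_{r>0}\frac{q^{r\alpha}-q^{-r\alpha}}{r}\,z^r\right)=\frac{1-q^{-\alpha}z}{1-q^{\alpha}z},
\]
together with its obvious variant for the $\osp(1|2n)^{(1)}$ diagonal case $i=j=n$, where the exponential decomposes into two such factors matching the two roots in $f_{nn}(z)$ and $h_{nn}(z)$; the sign twists $(-1)^r$ appearing in the $\osp(2|2n)^{(2)}$ and $\Sl(1|2n)^{(2)}$ cases correspond exactly to replacing $q^\alpha$ by $(-q)^\alpha$ in $g_{nn}$, and the factor $(1+(-1)^r)$ in the off-diagonal $\osp(2|2n)^{(2)}$ case yields the doubled factorisation in the numerator and denominator of $g_{ij}$. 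Checking this case-by-case against the piecewise definition of $f_{ij}/h_{ij}$ yields the stated $\psi_i\ef_j$, $\varphi_i\ef_j$ and $\varphi_i\psi_j$ relations. The same exponentials applied to \eqref{eq:hh} give $\psi_i\varphi_j$ and $\varphi_i\psi_j$ commutativity up to the quotient $g_{ij}(zw^{-1}\ga^{-1})/g_{ij}(zw^{-1}\ga)$.

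The relation \eqref{eq:xx-f} is obtained from \eqref{eq:xx} by summing $\ga^{(r-s)/2}\hh^+_{i,r+s}z^{-r}w^{-s}$: reindexing with $t=r+s$ and using $\sum_{r\in\Z}(z\ga^{-1}/w)^r=\delta(z\ga^{-1}/w)$ yields $\psi_i(z\ga^{-1/2})\delta(z\ga^{-1}/w)$, and similarly for the $\varphi_i$ term. The factor $\rho_{z,w}=(z/w)^{1/2}$ arises exactly when $\g=\osp(1|2n)^{(1)}$ and $i=n$, due to the half-integer shifts in the definitions of $\e_n(z),\f_n(z)$, and one checks that $(z/w)^{1/2}\delta(z\ga^{\mp1}/w)=\ga^{\mp1/2}\delta(z\ga^{\mp1}/w)$ so the extra factor is absorbed into the arguments.

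The main obstacle is the bookkeeping for the $\g$-dependent piecewise formulas: one must verify that the rational function $f_{ij}(z)/h_{ij}(z)$ is, in each of the cases listed in \eqref{eq-g}, precisely the exponential generating function of $\{u_{i,j,r}/r\}_{r>0}$ given in \eqref{eq:u-def}, including all the sign twists and the doubled factors in the $\osp$ cases. This is a finite check but requires careful attention to the distinction between $q^{\alpha}$ and $(-q)^\alpha$, and to when the ``symmetric'' sum $(1+(-1)^r)$ forces $g_{ij}$ to split as a product of two factors.
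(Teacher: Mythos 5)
Your proposal is correct and follows essentially the same route as the paper, which treats the lemma as a straightforward repackaging of the mode relations into formal distributions and only writes out the computation for \eqref{eq:xx-f} as an example; your case-by-case check that the exponential generating functions of $u_{i,j,r}/r$ reproduce $g_{ij}$ is precisely the ``straightforward computation'' the paper omits. One cosmetic slip: since $\rho_{z,w}$ is kept explicitly in \eqref{eq:xx-f} no absorption is needed, and in any case the correct identity is $(z/w)^{1/2}\delta\left(z\ga^{\mp1}/w\right)=\ga^{\pm1/2}\delta\left(z\ga^{\mp1}/w\right)$, not $\ga^{\mp1/2}$.
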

\begin{proof}
This can be proven by straightforward computation, thus we will not give the details. Instead, we consider only \eqref{eq:xx-f} as an example. The relations \eqref{eq:xx} and \eqref{eq:hh-hat} lead to
\begin{align*}
&[\e_i(z), \f_j(w)]
   =\rho_{z,w}\delta_{i,j}\sum_{r,s}   \dfrac {   \ga^{\frac{r-s}{2}}  \hh^{+}_{i,r+s}
                                     -\ga^{\frac{s-r}{2}}  \hh^{-}_{i,r+s}     }
                                  {  q_i -q_i ^{-1}  }
       z^{-r}w^{-s}\\
&=\frac{\rho_{z,w}\delta_{i,j}}{q_i-q_i^{-1}}\left\{
           \sum_{r}\hh^{+}_{i,r}       (z\ga^{-1/2})^{-r}    \delta\left(\frac{z\ga^{-1}}{w}\right)
          -\sum_{r}\hh^{-}_{i,r}  (z\ga^{1/2})^{-r}    \delta\left(\frac{z\ga}{w}\right)
                                        \right\}\\
&=\frac{\rho_{z,w}\delta_{i,j}}{q_i-q_i^{-1}}\left\{
          k_i  \xp\left(
                      (q_i -q_i^{-1}) \sum_{r=1}^{\infty}\h_{i,r}(z\ga^{-1/2})^{-r}
                       \right)
       \delta \left( \frac{z\ga^{-1}}{w} \right) \right.\\
&\hspace{21mm} \left.
      -k_i^{-1} \xp \left(
                       (q_i^{-1}-q_i) \sum_{r=1}^{\infty}\h_{i,-r}(z\ga^{1/2})^{r}
                       \right)
         \delta\left(\frac{z\ga}{w}\right)          \right\}.
\end{align*}
Using the definitions of $\psi_i(z)$ and $\varphi_i(z)$ on the right hand side, we immediately obtain \eqref{eq:xx-f}. In the opposite direction, we easily obtain
\eqref{eq:xx} and \eqref{eq:hh-hat} by comparing the coefficients
of $z^{-r}w^{-s}$ in \eqref{eq:xx-f}.
\end{proof}

\subsection{Some general facts} We discuss some simple facts, which  will be used in later sections.
\subsubsection{Triangular decompositions}
We describe two triangular decompositions for the quantum affine superalgebra $\Uq(\g)$, which will be used later.

The standard triangular decomposition is
\begin{eqnarray}\label{eq:triangular-1}
\begin{aligned}
&\Uq(\g)=\U_q^{(-)} \U_q^{(0)} \U_q^{(+)},  \quad {with}\\
& \U_q^{(+)} \text{  generated by  } \e_{i,0},  \e_{i,r}, \f_{i,r},   \hh_{i,r}^{\pm}, \ \text{for $r>0$, \ $1\le i\le n$}, \\
& \U_q^{(0)} \text{  generated by  } \ga_i^{\pm 1}, \ \ga^{\pm 1/2}, \ \text{for $1\le i\le n$}, \\
& \U_q^{(-)} \text{  generated by   } \f_{i,0},   \f_{i,r}, \e_{i,r},  \hh_{i,r}^{\pm}, \ \text{for $r<0$, \ $1\le i\le n$},
\end{aligned}
\end{eqnarray}
where  $\U_q^{(-)}$, $\U_q^{(0)}$ and $\U_q^{(+)}$ are all super subalgebras of $\Uq(\g)$.
In terms of the Chevalley generators in Definition \ref{defi:quantum-super},  $\U_q^{(+)}$, $\U_q^{(-)}$ and $\U_q^{(0)}$ are respectively generated by the elements $e_j$, $f_j$ and $k^{\pm 1}_j$ with $0\le j\le n$.

The other triangular decomposition is
\begin{eqnarray}\label{eq:triangular-2}
\begin{aligned}&\Uq(\g)=\U_q^- \U_q^0 \U_q^+, \quad \text{with}\\
& \U_q^+ \text{  generated by  } \e_{i,r}, \  \text{for $1\le i\le n$, \  $r\in\Z$},\\
& \U_q^0 \text{  generated by   } \hh_{i,r}^{\pm}, \ \ga^{\pm 1/2}, \  \text{for $1\le i\le n$,  \  $r\in\Z$},  \\
& \U_q^- \text{  generated by   } \f_{i,r}, \  \text{for $1\le i\le n$,  \  $r\in\Z$},
\end{aligned}
\end{eqnarray}
where $\U_q^{-}$, $\U_q^{0}$ and $\U_q^{+}$ are also super subalgebras.
The existence of this triangular decomposition is easy to see from the Drinfeld realisation, but very obscure from the point of view of Definition \ref{defi:quantum-super}.

Let $B_q:=\U_q^{(0)} \U_q^{(+)}$ or $B_q:=\U_q^0 \U_q^+$ depending on the triangular decomposition.  A vector $v_0$ in a $\Uq(\g)$-module is a highest weight vector if $\C(q) v_0$ is a $1$-dimensional $B_q$-module.  A $\Uq(\g)$-module generated by a highest weight vector is a highest weight  module with respect to the given triangular decomposition. We will study highest weight representations with respect to both triangular decompositions in later sections.

\subsubsection{Comments on spinoral type modules} \label{sect:type-1}
One can easily see that there exist the following superalgebra automorphisms of $\Uq(\g)$.
\begin{align}\label{eq:auto-1}
\iota_\varepsilon: k_i\mapsto \varepsilon_i k_i,\quad e_i\mapsto \varepsilon_i e_i,\quad f_i\mapsto f_i, \quad 0\le i\le n,
\end{align}
for any given $\varepsilon_i\in\{\pm 1\}$.
If $V$ is $\Uq(\g)$-module, we can twist it by $\iota_\varepsilon$ to obtain another module with the same underlying vector superspace but the twisted $\Uq(\g)$-action $\Uq(\g)\otimes V\longrightarrow V$ defined by
$
x\otimes v \mapsto  \iota_\varepsilon(x)v$ for all $x\in \Uq(\g) $ and $v\in V$.
If $k_i$ $(i=0,1,\dots n)$ act semi-simply on $V$, the eigenvalues of $k_i$ are multiplied by $\varepsilon_i$ in the twisted module.

Recall the notion of type-{\bf {1}} modules in the theory of ordinary quantum groups and quantum affine algebras.  For quantum supergroups and quantum affine superalgebras, a type-{\bf {1}} module over $\Uq(\g)$ is one such that the $k_i$ $(i=0,1,\dots n)$ act semi-simply with eigenvalues of the form $q_i^m$ for $m\in\Z$.
Any weight module over an ordinary quantum group or quantum affine algebra can be twisted into a type-{\bf {1}} module by analogues of the automorphisms \eqref{eq:auto-1}.  However, that is no longer true in the present context.
As we will see from Theorem \ref{theo-finite module}, some finite dimensional simple $\Uq(\g)$-modules have $k_n$-eigenvalues of the form $\pm\sqrt{-1}q^{m+1/2}$ with $m\in\Z$.  It is not possible to twist such modules into type-{\bf 1} by the automorphisms \eqref{eq:auto-1}.

For easy reference, we introduce the following definition.
\begin{defi}\label{def:type-s}
Call a $\Uq(\g)$-module type-{\bf {s}}, meaning spinoral type,  if all $k^{\pm1}_i$ act semi-simply with eigenvalues of the following form. If  $\g=\osp(1|2n)^{(1)}$ or $\Sl(1|2n)^{(2)}$, the eigenvalues of
$k_i$ for $0\le i< n$ belong to $\{q^j\mid j\in\Z\}$, and eigenvalues of $k_n$ to $\{\sqrt{-1}q^{j+1/2}\mid j\in\Z\}$.
If  $\g=\osp(2|2n)^{(2)}$, the eigenvalues of either
$k_0$, $k_n$, or both belong to $\{\sqrt{-1}q^{j+1/2}\mid j\in\Z\}$,
and the eigenvalues of the other $k_i$ to
$\{q^j\mid j\in\Z\}$.
\end{defi}

Type-{\bf s} modules exist  even for the quantum supergroup $\Uq(\osp(1|2))$ associated with $\osp(1|2)$.

\begin{example}[Type-{\bf s} representations of $\Uq(\osp(1|2))$]
The quantum supergroup $\Uq(\osp(1|2))$ is generated by $E, F$ and $K^{\pm1}$ with relations $K K^{-1}=K^{-1} K=1$ and
\[
K E K^{-1} = q E, \quad K F K^{-1} = q^{-1} F, \quad E F + F E = \frac{K-K^{-1}}{q-q^{-1}}.
\]
It has long been known that there exists an $\ell$-dimensional irreducible representation of $\Uq(\osp(1|2))$ for each positive integer $\ell$.  If $\ell$ is odd, the irreducible representation can be twisted into a type-{\bf 1} representation;  and  if $\ell$ is even, to a type-{\bf s} representation.

The smallest type-{\bf s} example is the $2$-dimensional irreducible representation, which is given by
\[
E\mapsto \begin{pmatrix}
0&\frac{\sqrt{-1}}{q^{1/2}-q^{-1/2}}\\
0&0 \end{pmatrix}, \quad
F\mapsto \begin{pmatrix} 0&0\\ 1&0 \end{pmatrix}, \quad  K\mapsto\begin{pmatrix}\sqrt{-1}q^{1/2}&0\\0&\sqrt{-1}q^{-1/2}\end{pmatrix}.
\]
\begin{rem}
The $2$-dimensional irreducible representation  of
$\Uq(\osp(1|2))$ does not have a classical limit, i.e., $q\to 1$ limit, nor do all the even dimensional irreducible representations.  This agrees with the fact that the finite dimensional irreducible representations of $\osp(1|2)$ are all odd dimensional.
\end{rem}
\end{example}

The quantum affine superalgebra $\Uq(\g)$ for all $\g$ in \eqref{eq:g} contains the quantum supergroup $\Uq(\osp(1|2))$ as a super subalgebra.  The type-{\bf s} representations of $\Uq(\g)$ restrict to type-{\bf s} representations of $\Uq(\osp(1|2))$.


\section{Vertex operator  representations}\label{sect:vertex}
We construct vertex  operator representations of the quantum affine superalgebras $\Uq(\g)$ for all $\g$ in \eqref{eq:g}. These representations are level $1$
irreducible integrable highest weight representations relative to the standard triangular
decomposition \eqref{eq:triangular-1} of  $\Uq(\g)$.  By level $1$ representations, we mean those with $\gamma$ acting by multiplication by $\pm q$ or  $\sqrt{-1}q$.

Our construction involves generalising to the quantum affine superalgebra context some aspects of  \cite{LP}.
The vertex operators obtained here have considerable similarities with those  \cite{Jn1, JnM} for ordinary twisted quantum affine algebras.

\subsection{The Fock space}\label{sec:space}
Let $\ell(\alpha_i):=(\alpha_i,\alpha_i)$ for any simple root $\alpha_i$.
For convenience,
we choose the normalisation for the bilinear form so that $\ell(\alpha_n)=2$ if $\g=\osp(2|2n)^{(2)}$,  and $\ell(\alpha_n)=1$ otherwise. Let $\wp=(-1)^{1/\ell(\alpha_n)}q$, and take $\wp^{1/2}=(-1)^{\frac{1}{2\ell(\alpha_n)}}q^{1/2}$.

Hereafter we will always consider $\Uq(\g)$ in the Drinfeld realisation  given in Proposition \ref{prop:dr} and Lemma \ref{lem:dr-f}.
Denote by $\Uq(\widetilde{\eta})$
the subalgebra of $\Uq(\g)$ generated by the elements  $\gamma^{1/2}$, $\qk_i$ and $\h_{i,r}$ ($r\in\Z\backslash{\{0\}}$, $1\le i\le n$),
and by $\U_q(\eta)$ that generated $\gamma^{1/2}$ and $\h_{i,r}$  ($r\in\Z\backslash{\{0\}}$, $1\le i\le n$).
Let $S(\eta^{-})$ be the symmetric algebra generated by $\h_{i,r}$ for $r\in\Z_{<0}$ and $1\le i\le n$.  Let $H_i(s)$ ($s\in\Z\backslash\{0\}$, $1\le i\le n$) be the linear operators acting on $S(\eta^{-})$ such that
\begin{eqnarray}\label{eq:vo-H}
\begin{aligned}
&\HH_i(-s)=\text{derivation defined by}\\
&\phantom{HH_i(-s)} \HH_i(-s)(\h_{j,r})=\delta_{r,s} \dfrac{ u_{i,j,-s} (\wp^{s}-\wp^{-s})  }
                                                           { s   (q_i-q_i^{-1})(q_j-q_j^{-1})  }, \\
&\HH_i(s)=\text{multiplication by $\h_{i,s}$}, \qquad \forall r, s\in\Z_{<0},
\end{aligned}
\end{eqnarray}
where $u_{i,j,-s}$ is defined by \eqref{eq:u-def}. Then
\begin{align}\label{eq:vo-hh}
[\HH_{i}(r),\HH_{j}(s)]=\delta_{r+s,0}\dfrac{u_{i,j,r} (\wp^{r}-\wp^{-r})  }
                                  { r   (q_i-q_i^{-1})(q_j-q_j^{-1})  },
                                 \quad \forall r,s\in\Z\backslash\{0\}.
\end{align}
The algebra  $\U_q(\eta)$ has the canonical irreducible representation on $S(\eta^{-})$ given by
\[
\begin{aligned}
\ga \mapsto \wp, \quad \h_{i,s} \mapsto \HH_i(s), \quad \forall   s\in\Z\backslash\{0\}.
\end{aligned}
\]

Let $\dot{\g}\subset\g$ be the regular simple Lie sub-superalgebra with the Dynkin diagram obtained from the Dynkin diagram of $\g$ by removing the node corresponding to $\alpha_0$. Then $\dot{\g}=\osp(1|2n)$ in all three cases of $\g$. Let $\Q$ be the root lattice of $\dot{\g}$ with the bilinear form inherited from that of $\g$. We regard $\Q$ as a multiplicative group consisting of elements of the form $e^\alpha$ with $\alpha\in \Q$. Let $\C[\Q]$ be the group algebra of $\Q$. Given any variable $z$ and any root $\alpha$,  we define a linear operator on $\C[\Q]$ by
\begin{eqnarray}\label{eq:operator-on-CQ}
 z^{\alpha}. e^{\beta}=z^{(\alpha,\beta)}e^{\beta}.
\end{eqnarray}
We also define the linear operator $\sigma_i$ on $\C[\Q]$ for all $i=1, 2, \dots, n$  by
\[\begin{aligned}
\sigma_i. e^{\beta}=(-1)^{(\alpha_i,\beta)}e^{\beta}.
\end{aligned}\]
Write $\Phi_i=\prod_{k=i}^{n}\sigma_k$ for $1\le i\le n$ and $\Phi_i=1$ for $i>n$. It is easy to check that $\Phi_i.e^{\pm\alpha_j}=(-1)^{\delta_{i,j}+\delta_{i+1,j}}e^{\pm\alpha_j}$ for $1\le i,j \le n$ and $\Phi_i^2=1$.

We also need some basic knowledge of the $q$-deformed Clifford algebra $\cq$,
which is generated by $\ka(r),\ka(s)$ ($r,s\in\Z +\frac{1}{2}$) with relations
\begin{equation}\label{eq:kk}
\ka(r)\ka(s)+\ka(s)\ka(r)=\delta_{r,-s}(q^r+q^{s}), \quad \forall r, s.
\end{equation}
We use $\Lambda(\cq^{-})$ to denote the exterior algebra generated by $\ka(r)$ for $r<0$,  and denote by $\Lambda(\cq^{-})_0$ (resp. $\Lambda(\cq^{-})_1$) the subspace of even (resp. odd) degree, where $\ka(r)$ ($r<0$) are regarded as having degree $1$.  Define the linear operators $K (s)$ on $\Lambda(\cq^{-})$ such that for any $\psi, \phi\in \Lambda(\cq^{-})$,
\[
\begin{aligned}
&K(s)\cdot\psi = \ka(s)\psi, \quad K (-s)\cdot\ka(r)=\delta_{r,s}(q^r+q^{-r}), \quad K (-s)\cdot 1=0, \\
&K(-s)\cdot(\psi\phi)= K(-s)\cdot(\psi) \phi + (-1)^{deg(\psi)}\psi K(-s)\cdot(\phi), \quad \forall r, s<0.
\end{aligned}
\]
Then $\cq$ acts on $\Lambda(\cq^{-})$ by
$\ka(r)\mapsto K(r)$ for all $r\in\Z+\frac{1}{2}$.
Let
\begin{align}\label{eq:v}
W=\left\{
\begin{aligned}
&\C[\Q],  \quad
                      \g=\osp(1|2n)^{(1)},\osp(2|2n)^{(2)};\\
&\C[\Q_0]\otimes \Lambda(\mathcal{C}_{\wp}^{-})_0  \oplus \C[\Q_0]e^{\lambda_1}\otimes \Lambda(\mathcal{C}_{\wp}^{-})_1,   \quad
                      \g=\Sl(1|2n)^{(2)},
\end{aligned} \right.
\end{align}
where $\Q_0$ is the lattice spanned by the set of roots with squared length 2 and $\lambda_1=\alpha_1+\alpha_2+\dots+\alpha_n$.
Now we construct the vector space
$
V=S(\eta^{-})\otimes W.
$


\subsection{Construction of the vacuum representations}\label{sec-vo-0}
We start by defining
\begin{align*}
&K(z)=\sum_{s\in\Z +1/2}K(s)z^{-s}, \\
&T^{+}_i(z)=\begin{cases}
e^{\alpha_i} \Phi_i z^{\alpha_i+\ell(\alpha_i)/2}, \quad \text{if  }  \g=\osp(1|2n)^{(1)}, \osp(2|2n)^{(2)};\\
e^{\alpha_i}  \Phi_i  z^{\alpha_i+\ell(\alpha_i)/2}K(z), \quad \text{if  }  \g=\Sl(1|2n)^{(2)},
\end{cases}\\
&T^{-}_i(z)=\begin{cases}
e^{-\alpha_i} \Phi_{i+1} z^{-\alpha_i+\ell(\alpha_i)/2}, \quad \text{if  } \g=\osp(1|2n)^{(1)}, \osp(2|2n)^{(2)};\\
e^{-\alpha_i}  \Phi_{i+1}  z^{-\alpha_i+\ell(\alpha_i)/2}(- K(z)), \quad \text{if  }  \g=\Sl(1|2n)^{(2)},
\end{cases}
\end{align*}
and introducing the following formal distributions:
\begin{align*}
&E^{\pm}_{i}(z) =\xp\left(
        \pm\sum_{k=1}^{\infty}\frac{\wp^{\mp k/2}}{ \{k\}_{q_i} }\HH_i(-k)z^k
                                  \right),\\
&F^{\pm}_{i}(z) =\xp\left(
       \mp\sum_{k=1}^{\infty}\frac{\wp^{\mp k/2}}{ \{k\}_{q_i}}\HH_i(k)z^{-k}
                                 \right),
\end{align*}
where $\{k\}_{q_i}=[k]_{\wp}\cdot \frac{\wp-\wp^{-1}}{q_i-q_i^{-1}}=\frac{\wp^k-\wp^{-k}}{q_i-q_i^{-1}}$.
Using them, we define linear operators $\X^{\pm}_j(k)$ ($1\le j\le n$,   $k\in\Z$) on the vector space $V$ by
\begin{align}\label{eq:vo}
\X^{\pm}_{i}(z)=E^{\pm}_{i}(z)F^{\pm}_{i}(z)T^{\pm}_i(z), \quad i=1, 2, \dots, n,
\end{align}
where
\[
\begin{aligned}
\X^{\pm}_i(z)&=\sum_{k\in\Z} \X^{\pm}_i(k) z^{-k}, \ \qquad \text{for all $i\ne n$,}\\
\X^{\pm}_n(z)&=\sum_{k\in\Z} \X^{\pm}_n(k) z^{-k}, \ \qquad \text{if  $\g\ne\osp(1|2n)^{(1)}$,}\\
\X^{\pm}_n(z)&=\sum_{k\in\Z} \X^{\pm}_n(k) z^{-k+1/2},  \quad \text{ if $\g=\osp(1|2n)^{(1)}$}.
\end{aligned}
\]

We have the following result.
\begin{theo}\label{them:v.o}
The quantum affine superalgebra $\Uq(\g)$ acts irreducibly
on the vector space  $V$, with the action defined by
\begin{eqnarray}\label{eq:action}
\begin{aligned}
&\ga^{1/2}\mapsto \wp^{1/2},\ \ \qk_i^{1/2}\mapsto (\varpi_i\sigma_i \wp^{\alpha_i})^{1/2},
\ \  \h_{i,s}\mapsto \HH_i(s),\\
&\e_{i,k}\mapsto \X^{+}_i(k),\ \ \f_{i,k}\mapsto \varrho_i\X^{-}_i(k),\\
&\forall i=1, 2, \dots, n, \ \ s\in\Z\backslash\{0\}, \ \ k\in\Z,
\end{aligned}
\end{eqnarray}
where
\[\begin{aligned}
& \varpi_i=\begin{cases}
  \wp^{-1/2}, &\text{if \  } \g=\osp(1|2n)^{(1)}, \ i= n;\\
   1,               & \text{otherwise};
             \end{cases} \\
&\varrho_i=\begin{cases}
 -2^{-1}\{\ell(\alpha_i)/2\}_{q_i}, &\text{if \  } \g=\osp(2|2n)^{(2)}, \ i\neq n;\\
 -\{\ell(\alpha_i)/2\}_{q_i},           & \text{otherwise}.
                \end{cases}
\end{aligned}\]
\end{theo}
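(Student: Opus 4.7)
The plan is to verify that the assignment \eqref{eq:action} respects every relation in the current presentation of $\Uq(\g)$ from Lemma \ref{lem:dr-f}, and then to establish irreducibility of the resulting module $V$. The bulk of the verification proceeds through normal-ordering and OPE calculations among the bosonic factors $E_i^{\pm}(z)$, $F_i^{\pm}(z)$ and the lattice-plus-Clifford factors $T_i^{\pm}(z)$, while irreducibility follows from a cyclicity argument based on the $\Q$-grading of $W$.

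First I would record the basic OPE identities. Since the $\HH_i(s)$ form a Heisenberg algebra with commutators \eqref{eq:vo-hh} that match \eqref{eq:hh} under $\ga\mapsto\wp$, relation \eqref{eq:hh} is immediate. Exponentiating, $\psi_i(z)$ and $\varphi_j(w)$ commute up to a scalar which, by summing $\sum_{k\ge 1}\frac{u_{i,j,k}}{k(q_i-q_i^{-1})(q_j-q_j^{-1})}z^k$ using the explicit form of $u_{i,j,k}$ in \eqref{eq:u-def} and the Taylor expansion \eqref{eq-g} of $f_{ij}/h_{ij}$, equals $g_{ij}(zw^{-1}\wp^{-1})/g_{ij}(zw^{-1}\wp)$. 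The same Heisenberg calculation combined with the lattice action $z^{\alpha_i}.e^{\alpha_j}=z^{(\alpha_i,\alpha_j)}e^{\alpha_j}$ produces the relations $\psi_i(z)\X_j^{\pm}(w)\psi_i(z)^{-1}$ and the analogous $\varphi$ relations; the sign operator $\sigma_i$ is precisely what converts $q^{(\alpha_i,\alpha_j)}$ in $f_{ij}/h_{ij}$ into $(-q)^{(\alpha_i,\alpha_j)}$ where required by \eqref{eq-g}.

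The central identity \eqref{eq:xx-f} is more delicate. I would compute $\X^+_i(z)\X^-_j(w)$ and $\X^-_j(w)\X^+_i(z)$ separately, identifying each as a rational function in $z/w$ with simple (or, in the $(n,n)$ case, double) poles at $z=\wp^{\pm 1}w$ times a normal-ordered operator. Expanding in opposite domains and taking the graded difference produces precisely the two delta functions $\delta(z\wp^{\mp 1}/w)$ whose coefficients, thanks to the specific choices of $\varrho_i$ and $\varpi_i$, reproduce $\psi_i(z\wp^{-1/2})/(q_i-q_i^{-1})$ and $\varphi_i(z\wp^{1/2})/(q_i-q_i^{-1})$. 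For $i\ne j$, the cocycle identity satisfied by $\Phi_i, \Phi_{i+1}, \Phi_j, \Phi_{j+1}$ makes the two OPEs regular and forces them to cancel in the supercommutator. The prefactor $\rho_{z,w}=(z/w)^{1/2}$ in the $\osp(1|2n)^{(1)}$, $i=n$ case is a direct consequence of the half-integer mode expansion of $T^\pm_n(z)$.

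The Serre relations (A)--(E) form the principal technical obstacle. After extracting the scalar OPE prefactors from products $\X^{\pm}_i(z_1)\cdots\X^{\pm}_i(z_\ell)\X^{\pm}_j(w)$, each Serre identity reduces to a symmetrisation of rational functions that can be cleared by specialisation identities of the form $\prod_{a<b}(z_a-q^{\pm k}z_b)=0$ after delta-function manipulations using $f(z,w)\delta(w/z)=f(z,z)\delta(w/z)$. For (B) and (C) I would adapt the scheme of Jing \cite{Jn1, JnM}, keeping track of the sign twists from $\sigma_i$ and the factor $\tilde{q_n}=(-1)^{1/2}q_n$ that appears when $\alpha_n$ is odd. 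The cubic identities in (D) and (E) are the hardest: after the triple symmetrisation built into the $sym_{r_1,r_2,r_3}$ (resp.\ $sym_{r,s}$) prescription, one must verify that the $q_n$-deformed brackets $[\,\cdot\,,\,\cdot\,]_{q_n^{2k}}$ for $k=1,2,3$ collapse, relying either on the $q$-Clifford relation \eqref{eq:kk} in the $\Sl(1|2n)^{(2)}$ case or on the double-pole structure of the $nn$-OPE arising from the two-factor form of $f_{nn}/h_{nn}$ in the $\osp(1|2n)^{(1)}$ case. This is where I expect most of the technical work to lie. Finally, irreducibility is a standard Fock-space argument: the Heisenberg subalgebra $\U_q(\eta)$ acts irreducibly on $S(\eta^-)$, while the zero modes of $T^\pm_i(z)$ act as lattice translations on $W$ (together with the Clifford action in the $\Sl(1|2n)^{(2)}$ case) and generate all of $W$ from $1$; given any nonzero submodule $V'\subseteq V$, the $\Q$-grading lets us pick a nonzero vector of lowest lattice weight annihilated by all $\HH_i(s)$ with $s>0$, which must be proportional to the cyclic vector $1\otimes 1$, forcing $V'=V$.
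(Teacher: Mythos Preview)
Your proposal is correct and follows essentially the same approach as the paper: verify the defining relations of the current presentation (Lemma \ref{lem:dr-f}) via normal-ordering/OPE computations among the bosonic exponentials and lattice/Clifford factors, then argue irreducibility separately. The paper's own proof is in fact sketchier than your plan --- it carries out only the case $\g=\osp(1|2n)^{(1)}$ for relation \eqref{eq:xx-f} and a single Serre relation (type (A) with $(\alpha_i,\alpha_j)=-1$), declaring the remaining relations and the other two superalgebras to follow by ``similar computation''; for irreducibility it simply cites \cite{Jn96} for the separate irreducibility of $S(\eta^-)$ and $W$, which amounts to the same content as your lowest-lattice-weight argument.
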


\begin{proof}
The irreducibility of $V$ as a $\Uq(\g)$-module follows from the fact that the $\Uq(\eta)$-module $S(\eta^{-})$ and $\Uq(\widetilde{\eta})$-module  $W$ are both irreducible. This was proved in \cite{Jn96}.
Thus the proof of the theorem essentially boils down to verifying that the operators
$ \HH_i(k)$ and $\X^{\pm}_i(k)$ satisfy the commutation relations of
$\h_{i,k}$ and $\ef_{i,k}$. We show this by using  calculus of
 formal distributions.

Consider the vertex operators \eqref{eq:vo} in the case of  $\g=\osp(1|2n)^{(1)}$.
We claim that they satisfy the following relation (cf. \eqref{eq:xx-f}):
\begin{eqnarray}\label{eq:vo-XX}
\begin{aligned}
&[\X^{+}_{i}(z), \X^{-}_{j}(w)]
=\frac{\delta_{i j}\,\rho_{z,w}\,\varrho^{-1}_i}{q_i-q_i^{-1}}
     \left\{\sigma_i \varpi_i \wp^{\alpha_i} \widetilde{V}_i^+(z)
                  \delta\left(\wp^{-1}\frac{z}{w}\right)
      \right.\\
&\hspace{28mm} \left.
     -\sigma_i \varpi_i \wp^{-\alpha_i}
\widetilde{V}_i^-(z)
       \delta\left(\wp\frac{z}{w}\right) \right\},
\end{aligned}
\end{eqnarray}
where
\begin{eqnarray}\label{eq:widetildeV}
\begin{aligned}
&\widetilde{V}_i^+(z)=\xp\left(
            \sum_{k=1}^{\infty}   (q_i-q_i^{-1})    \HH_i(k)(z\wp^{-1/2})^{-k}     \right), \\
&\widetilde{V}_i^-(z)=\xp\left( \sum_{k=1}^{\infty}(q_i^{-1}-q_i)\HH_i(-k)(z\wp^{1/2})^{k}\right).
\end{aligned}
\end{eqnarray}

If $(\alpha_i,\alpha_j)=0$ (necessarily $i\ne j$), the claim is clear.

If  $(\alpha_i,\alpha_j)\neq 0$, there are three possibilities:
$(\alpha_i,\alpha_j)=-1$ with $i\ne j$,  and $(\alpha_i,\alpha_j)=1$ or $2$ with $i= j$.

Define normal ordering as usual by placing $\HH_i(-k)$ with $k>0$ on the left of
$\HH_j(k)$, $\exp^\alpha$ on the left of $z^\beta$, and $K(-s)$ with $s>0$ on the left of $K(s)$, where for the $K(r)$'s an order change procures a sign.
Let
\begin{align*}
:T^{+}_i(z)T^{-}_j(w):&=e^{ \alpha_i-\alpha_j} \Phi_i\Phi_{j+1}  z^{ \alpha_i}w^{-\alpha_j},
\end{align*}
then we have the following relations:  if $(\alpha_i,\alpha_j)\neq 1$,
\begin{align*}
:T_i^+(z)T_j^-(w):&=(-1)^{\delta_{i-1,j}+\delta_{i,j}}  T_i^+(z)T_j^-(w)  z^{(\alpha_i,\alpha_j)}   z^{-\ell(\alpha_i)/2}   w^{-\ell(\alpha_j)/2}\\
&=(-1)^{\delta_{i-1,j}+\delta_{i,j}}  T_j^-(w)T_i^+(z) w^{(\alpha_i,\alpha_j)} z^{-\ell(\alpha_i)/2}  w^{-\ell(\alpha_j)/2};
\end{align*}
if $(\alpha_i,\alpha_j)= 1$,
\begin{align*}
:T_i^+(z)T_j^-(w):&=-T_i^+(z)T_j^-(w)  z^{(\alpha_i,\alpha_j)} z^{-\ell(\alpha_i)/2}  w^{-\ell(\alpha_j)/2}\\
&=T_j^-(w)T_i^+(z) w^{(\alpha_i,\alpha_j)} z^{-\ell(\alpha_i)/2}  w^{-\ell(\alpha_j)/2}.
\end{align*}
Also
\begin{eqnarray}\label{eq:normal order-X}
\begin{aligned}
:\X^{+}_{i}(z)\X^{-}_{j}(w):=E^{+}_{i}(z)E^{\pm}_j(w)F^{+}_{i}(z)F^{-}_{j}(w):T^{+}_i(z)T^{-}_j(w):.
\end{aligned}
\end{eqnarray}
Thus $\X^{-}_{j}(w)\X^{+}_{i}(z)$ can be expressed as
\begin{align*}
:\X^{+}_{i}(z)\X^{-}_{j}(w):  \xp \left( \sum_{k=1}^{\infty} \frac{u_{i,j,k}}{k(\wp^k-\wp^{-k})}z^{-k}w^k \right)z^{(\alpha_i,-\alpha_j)}z^{\ell(\alpha_i)/2}w^{\ell(\alpha_j)/2},
\end{align*}
where we have used the Baker-Campbell-Hausdorff formula.

Let  $\delta_1(x)=\sum_{n\le 0}(\wp^{-n}-\wp^{n})x^n$. Then direct computation shows that
$\X^{+}_{i}(z)\X^{-}_{j}(w)$ can be expressed as
\begin{align*}
  & :\X^{+}_{i}(z)\X^{-}_{j}(w): (-1)^{\delta_{i-1,j}}  (z+w)\, z^{\ell(\alpha_i)/2}w^{\ell(\alpha_j)/2} , \quad \text{if \ }  (\alpha_i,\alpha_j)=-1,\\
  & :\X^{+}_{i}(z)\X^{-}_{j}(w): \frac{1}{\wp-\wp^{-1}}\delta_1(z/w),  \quad \text{if \ }  (\alpha_i,\alpha_j)=2,\\
  & :\X^{+}_{i}(z)\X^{-}_{j}(w): \frac{1}{\wp-\wp^{-1}} \delta_1(z/w)\, (z+w)(zw)^{-1/2}, \quad \text{if \ }  (\alpha_i,\alpha_j)=1,
\end{align*}
where we have used the formula $\text{ln}(1-x)=-\sum_{n=1}^{\infty}\frac{x^n}{n}$.
Note that $z^{\pm 1/2}$ and $w^{\pm 1/2}$ may appear in $\X^{+}_{i}(z)\X^{-}_{j}(w)$.
A similar computation shows that
\begin{itemize}
\item if $(\alpha_i,\alpha_j)=-1$,
\begin{align*}
 \X^{-}_{j}(w)\X^{+}_{i}(z)=:\X^{+}_{i}(z)\X^{-}_{j}(w): (-1)^{\delta_{i-1,j}} (z+w)\, z^{\ell(\alpha_i)/2}w^{\ell(\alpha_j)/2},
\end{align*}

\item if $ (\alpha_i,\alpha_j)=2,$
\begin{align*}
\X^{-}_{j}(w)\X^{+}_{i}(z)=:\X^{+}_{i}(z)\X^{-}_{j}(w): \frac{1}{\wp-\wp^{-1}}\,\delta_1(w/z),
\end{align*}

\item if $(\alpha_i,\alpha_j)=1, $
\begin{align*}
&&\X^{-}_{j}(w)\X^{+}_{i}(z)=:\X^{+}_{i}(z)\X^{-}_{j}(w): \frac{1}{\wp^{-1}-\wp}\, \delta_1(w/z)\, (z+w)(zw)^{-1/2}.
\end{align*}
\end{itemize}
Using these we obtain
\begin{align*}
&[\X^{+}_{i}(z),\X^{-}_{j}(w)]=\X^{+}_{i}(z) \X^{-}_{j}(w)-(-1)^{[\alpha_i][\alpha_j]}\X^{-}_{j}(w) \X^{+}_{i}(z)\\
=&\begin{cases}
:\X^{+}_{i}(z)\X^{-}_{j}(w):\frac{(z+w)(zw)^{-1/2}}{\wp-\wp^{-1}}\left(\delta(\wp^{-1}z/w)-\delta(\wp z/w)\right),
                     &(\alpha_i,\alpha_j)=1,\\
:\X^{+}_{i}(z)\X^{-}_{j}(w):\frac{1}{\wp-\wp^{-1}}\left(\delta(\wp^{-1}z/w)-\delta(\wp z/w)\right),
                     &(\alpha_i,\alpha_j)=2,\\
0,                  &(\alpha_i,\alpha_j)=-1,
\end{cases}
\end{align*}
where $[\alpha_i]=0$ if $\alpha_i$ is an even root, and 1 otherwise. This in particular shows that  \eqref{eq:vo-XX} holds  for all $i\ne j$.

In the cases with $i=j$, by using  $f(z,w)\delta(\frac{w}{z})=f(z,z)\delta(\frac{w}{z})$, we obtain
\begin{align*}
&:\X^{+}_{i}(z)\X^{-}_{j}(w): \delta\left(\wp^{-1}\frac{z}{w}\right)
=-\sigma_i \wp^{\alpha_i}
        \widetilde{V}_i^+(z)
         \delta\left(\wp^{-1}\frac{z}{w}\right), \\
&:\X^{+}_{i}(z)\X^{-}_{j}(w): \delta\left(\wp \frac{z}{w}\right)
=-\sigma_i \wp^{-\alpha_i}
\widetilde{V}_i^-(z)
       \delta\left(\wp\frac{z}{w}\right),
\end{align*}
where $\widetilde{V}_i^+(z)$ and $\widetilde{V}_i^-(z)$ are defined by \eqref{eq:widetildeV}. Note that
\[
\begin{aligned}
&(z+w)(zw)^{-1/2}\delta\left(\wp^{\pm1}\frac{z}{w}\right)=(z/w)^{1/2}(1+\wp^{\pm1})\delta\left(\wp^{\pm1}\frac{z}{w}\right).
\end{aligned}
\]
These formulae immediately lead to \eqref{eq:vo-XX}.

To consider the Serre relations, we take as an example the relation \eqref{eq:xrs-xsr-f} when $(\alpha_i,\alpha_j)=-1$. In this case, \eqref{eq:xrs-xsr-f} is equivalent to
\begin{align*}
   (z-q^{-1}w)\e_i(z)\e_j(w)
=(q^{-1}z-w)\e_j(w)\e_i(z).
\end{align*}
Thus, we need to show
\begin{align}\label{eq:XX}
 (z-q^{-1}w)\X^{+}_{i}(z)\X^{+}_{j}(w)
=(q^{-1}z-w)\X^{+}_{j}(w)\X^{+}_{i}(z).
\end{align}

Let $:T^{+}_i(z)T^{+}_j(w):=e^{ \alpha_i+\alpha_j} \Phi_i\Phi_{j} z^{ \alpha_i}w^{\pm\alpha_j},$ and
\[\begin{aligned}
:\X^{+}_{i}(z)\X^{+}_{j}(w):=E^{+}_{i}(z)E^{+}_j(w)F^{+}_{i}(z)F^{+}_{j}(w):T^{+}_i(z)T^{+}_j(w):.
\end{aligned}\]
By \eqref{eq:normal order-X}, $\X^{+}_{i}(z)\X^{+}_{j}(w)$ is equal to
\begin{align*}
&:\X^{+}_{i}(z)\X^{+}_{j}(w):
\xp\left[    -\sum_{k=1}^{\infty} \frac {   \wp^{-k}} {  \{k\}_{q_i}\{k\}_{q_j}  }  [\HH_i(k),\HH_j(-k)]
               \left( \frac{w}{z} \right)^k     \right] z^{-1} z^{\ell(\alpha_i)} w^{\ell(\alpha_j)},
               \end{align*}
which can be simplified to
 $:\X^{+}_{i}(z)\X^{+}_{j}(w): \left(1-q^{-1}\frac{w}{z}\right)^{-1} z^{-1} z^{\ell(\alpha_i)} w^{\ell(\alpha_j)}.
$
Thus
\[
\X^{+}_{i}(z)\X^{+}_{j}(w) =:\X^{+}_{i}(z)\X^{+}_{j}(w): (-1)^{\delta_{i-1,j}} \left(z-q^{-1}w\right)^{-1}  z^{\ell(\alpha_i)} w^{\ell(\alpha_j)}.
\]
Similarly we can show that
\begin{align*}
\X^{+}_{j}(w)\X^{+}_{i}(z)=:\X^{+}_{i}(z)\X^{+}_{j}(w): (-1)^{\delta_{i,j-1}} \left(w-q^{-1}z\right)^{-1} z^{\ell(\alpha_i)} w^{\ell(\alpha_j)}.
\end{align*}
Note that $i=j-1$ or $j+1$ in this case. Then two relations above immediately imply \eqref{eq:XX}.

Similar computation proves the theorem for the other $\g$.
\end{proof}

\begin{rem}
The representations in Theorem \ref{them:v.o} are not of type-{\bf 1}.
Note in particular that $\gamma$ acts by $\wp$. However, we can twist them into type-{\bf 1} or type-{\bf s} representations (see Definition \ref{def:type-s}) by the automorphisms  \eqref{eq:auto-1}.
\end{rem}


\subsection{Construction of the other level $1$ irreducible representations}\label{sect:other-level-1}
We now consider the vertex operator construction for the other level $1$ irreducible integrable highest weight representations with respect to the standard triangular
decomposition \eqref{eq:triangular-1}.
Observe that for  $\Uq(\osp(1|2n)^{(1)})$, the vacuum representation is the only such representation. Thus we will consider $\Uq(\g)$ for
$\g=\Sl(1|2n)^{(2)}$ and $\osp(2|2n)^{(2)}$ only.
We will only state the main results;  their proofs are quite similar to those in Section \ref{sec-vo-0}.

We maintain the notation of Section \ref{sec-vo-0}.

\subsubsection{The case of  $\Uq(\Sl(1|2n)^{(2)})$ }
There is only one irreducible integrable highest weight representation at level 1 beside the vacuum representation. It can be constructed as follows.

Recall the definition of $W$ in  \eqref{eq:v}.  Let $\lambda_n$ be the fundamental weight of $\dot{\g}$ corresponding to $\alpha_n$, and consider the subset
$\lambda_n+\Q$ of the weight lattice of $\dot{\g}$.  The linear operators $z^{\alpha}$ defined by \eqref{eq:operator-on-CQ} act on the group algebra of the weight lattice of
$\dot{\g}$ in the obvious way.
Denote $W_n=e^{\lambda_n}\C[\Q]$ and $V_n=S(\eta^{-})\otimes W_n$. Then $V_n$ is the level $1$ simple $\Uq(\Sl(1|2n)^{(2)})$-module with the action give by \eqref{eq:action} in terms of vertex operators. The highest weight vector is $1\otimes e^{\lambda_n}$.

\subsubsection{The case of $\Uq(\osp(2|2n)^{(2)})$ }
There are another two simple integrable highest weight modules at level $1$,  respectively associated with the fundamental weights $\lambda_1$ and $\lambda_n$ of $\dot{\g}$. Here $\lambda_1$ and $\lambda_n$ correspond to $\alpha_1$ and $\alpha_n$ respectively.  To construct the representations, we need the following q-deformed Clifford algebra $\cqq$, which is generated by $\ta(r),\ta(s)$ ($r,s\in\Z$) with relations
\begin{equation}\label{eq:tt}
\ta(r)\ta(s)+\ta(s)\ta(r)=\delta_{r,-s}(q^r+q^{s}), \quad \forall r, s.
\end{equation}
These are q-deformed Ramond fermionic operators.
Similar to Section \ref{sec:space}, we define linear operators $T (s)$ acting on $\Lambda(\cqq^{-})$ such that for any $\psi, \phi\in \Lambda(\cqq^{-})$,
\[
\begin{aligned}
&T(s)\cdot\psi = \ta(s)\psi, \quad T (-s)\cdot\ka(r)=\delta_{r,s}(q^r+q^{-r}), \quad T (-s)\cdot 1=0, \\
&T(-s)\cdot(\psi\phi)= T(-s)\cdot(\psi) \phi + (-1)^{deg(\psi)}\psi T(-s)\cdot(\phi), \quad \forall r, s<0,
\end{aligned}
\]
and $T(0)$ acts as the identity.

We replace $K(z)$ in Section \ref{sec-vo-0} by
$
K(z)=\sum_{s\in\Z}T(s)z^{-s}
$
and use it in \eqref{eq:vo} to obtain the corresponding vertex operators.
Now define
\[\begin{aligned}
&V^{(1)}=S(\eta^{-})\otimes W^{(1)} \quad \text{and} \quad V^{(n)}=S(\eta^{-})\otimes W^{(n)} \ \text{ with} \\
&W^{(1)}=e^{\lambda_1}\C[\Q_0]\otimes \Lambda(\mathcal{C}_{\wp}^{-})_0 \oplus \C[\Q_0]\otimes \Lambda(\mathcal{C}_{\wp}^{-})_1,\\
&W^{(n)}=e^{\lambda_n}\C[\Q]\otimes \Lambda(\mathfrak{C}_{\wp}^{-}).
\end{aligned}\]
Then $V^{(1)}$ and $V^{(n)}$ are the simple $\Uq(\osp(2|2n)^{(2)})$-modules at level $1$  with the actions formally given by \eqref{eq:action} but in terms of the new vertex operators.  The highest weight vectors are $1\otimes e^{\lambda_1}\otimes 1$ and $1\otimes e^{\lambda_n}\otimes 1$ respectively.


\subsection{Another construction of vacuum representations}\label{sect:other-vacuum}
For the quantum affine superalgebras
$\Uq(\osp(1|2n)^{(1)})$ and $\Uq(\Sl(1|2n)^{(2)})$, it is possible to modify the
vertex operators of the vacuum representations to make $\gamma$ act by $q$,
and this is what we will do in this section. The modified vertex operator representation of $\Uq(\Sl(1|2n)^{(2)})$ given here is of type-{\bf 1}.

For both affine superalgebras,  we choose in this section the normalisation for the bilinear form on the weight space so that $(\alpha_n,\alpha_n)=1$.

Recall the definitions of  $ \U_q(\eta)$ and $S(\eta^{-})$ in section \ref{sec:space}.  Let us now define new linear operators acting on $S(\eta^{-})$, denoted by
$H_i^q(s)$ with $s\in\Z\backslash\{0\}$, $1\le i\le n$,  as follows.
\begin{eqnarray}\label{eq:vo-Hp}
\begin{aligned}
&\HH_i^q(-s)=\text{derivation defined by}\\
&\phantom{HH_iq(-s)} \HH_i^q(-s)(\h_{j,r})=\delta_{r,s} \dfrac{ u_{i,j,-s} (q^{s}-q^{-s})  }
                                                           { s   (q_i-q_i^{-1})(q_j-q_j^{-1})  }, \\
&\HH_i^q(s)=\text{multiplication by $\h_{i,s}$}, \qquad \forall r, s\in\Z_{<0},
\end{aligned}
\end{eqnarray}
where $u_{i,j,-s}$ is defined by \eqref{eq:u-def}.  This differs from
\eqref{eq:vo-H} in that $\wp$  is replaced by $q$. Now we have
\begin{align}\label{eq:vo-hh-q}
[\HH^q_{i}(r),\HH^q_{j}(s)]=\delta_{r+s,0}\dfrac{u_{i,j,r} (q^{r}-q^{-r})  }
                                  { r   (q_i-q_i^{-1})(q_j-q_j^{-1})  },
                                 \quad \forall r,s\in\Z\backslash\{0\},
\end{align}
and we obtain the following irreducible $\U_q(\eta)$-representation on $S(\eta^{-})$
\[
\begin{aligned}
\ga \mapsto q, \quad \h_{i,s} \mapsto \HH_i(s), \quad \forall   s\in\Z\backslash\{0\}.
\end{aligned}
\]

Define the $2$-cocycle $C:\Q \times \Q \to \{\pm 1\}$, satisfying
\[
\begin{aligned}
         C(\alpha+\beta,\ga )=C(\alpha,\ga)C(\beta,\ga),
\quad C(\alpha,\beta+\ga)=C(\alpha,\beta)C(\alpha,\ga),
\quad \forall \alpha, \beta, \ga,
\end{aligned}
\]
such that $ C(0, \beta)=C(\alpha, 0)=1$, and for  any simple roots $\alpha_i$ and $\alpha_j$,
\[\begin{aligned}
C(\alpha_i,\alpha_j)=\begin{cases}
(-1)^{(\alpha_i,\alpha_j)+(\alpha_i,\alpha_i)(\alpha_j,\alpha_j)}, & i\le j,\\
1,& i>j.
\end{cases}
\end{aligned}\]
Obviously,  $\Q$ has a unique central extension $\hQ$,
\[\begin{aligned}
1\rightarrow  \Z_2 \rightarrow   \hQ  \rightarrow \Q\rightarrow 1
\end{aligned}\]
defined in the following way.  We regard $\hQ$ as a multiplicative group consisting of elements $\pm e^\alpha$ with $\alpha\in \Q$. Then $(-1)^a e^\alpha (-1)^b e^\beta=(-1)^{a+b}C(\alpha,\beta)e^{\alpha+\beta}$, where $a, b\in\{0, 1\}$ and $\alpha, \beta\in \Q$.
Let $\C[\hQ]$ be the group algebra of  $\hQ$, and let $J$ be the two-sided ideal generated by $e^\alpha +(-e^\alpha)$ for all $\alpha$. Denote the quotient $\C[\hQ]/J$ by $\C[\Q]$.
Now $\pm e^\alpha\in\C[\hQ]$ are natural linear operators acting on $\C[\Q]$. The linear operators $z^{\alpha}$ are the same as in section \ref{sec:space}.

Let $V=S(\eta^{-})\otimes W$, where $W$ is defined in \eqref{eq:v}.

For all $i=1, \dots, n$, let
\begin{align*}
\widetilde{T}^{\pm}_i(z)=\begin{cases}
e^{\pm\alpha_i} z^{\pm\alpha_i+\ell(\alpha_i)/2}, & \g=\osp(1|2n)^{(1)};\\
e^{\pm\alpha_i} z^{\pm\alpha_i+\ell(\alpha_i)/2}(\pm K(z)), & \g=\Sl(1|2n)^{(2)}.
\end{cases}
\end{align*}
Define
\[
\begin{aligned}
&\widetilde{E}^{\pm}_{i}(z)=\xp\left(
        \pm\sum_{k=1}^{\infty}\frac{q^{\mp k/2}}{ [k]_{q_i} }\HH^q_i(-k)z^k
                                  \right), \\
&\widetilde{F}^{\pm}_{i}(z)=\xp\left(
       \mp\sum_{k=1}^{\infty}\frac{q^{\mp k/2}}{ [k]_{q_i}}\HH^q_i(k)z^{-k}
                                 \right), \quad \text{for $i\ne n$}; \\
&\widetilde{E}^{\pm}_n(z)=\xp\left(
        \pm\sum_{k=1}^{\infty}\frac{q^{\mp k/2}}{ [2k]_{q_n} }\HH^q_n(-k)z^k
                                  \right), \\
&\widetilde{F}^{\pm}_n(z)=\xp\left(
       \mp\sum_{k=1}^{\infty}\frac{q^{\mp k/2}}{ [2k]_{q_n}}\HH^q_n(k)z^{-k}
                                 \right),
\end{aligned}
\]
and finally set
\begin{align}\label{eq:vo-p}
\widetilde{\X}^{\pm}_{i}(z) &=\widetilde{E}^{\pm}_{i}(z) \widetilde{F}^{\pm}_{i}(z)
                            \widetilde{T}^{\pm}_i(z), \quad \forall i.
\end{align}
%
Similar arguments as those in the proof of Theorem \ref{them:v.o} can prove the following result.
\begin{theo}\label{them:v.o-q}
Let $\g$ be $\osp(1|2n)^{(1)}$ or $\Sl(1|2n)^{(2)}$. Then the quantum affine superalgebra $\Uq(\g)$ acts irreducibly
on the vector space  $V$ with the action defined by
\begin{eqnarray}\label{eq:action-1}
\begin{aligned}
&\ga^{1/2}\mapsto q^{1/2},\ \  \qk_i^{ 1/2}\mapsto (\varpi_i q^{\alpha_i})^{1/2}, \ \  \h_{i,s}\mapsto \HH^q_i(s),\\
&  \e_{i,k}\mapsto \widetilde{\X}^{+}_i(k),\ \ \f_{i,k}\mapsto \vartheta_i \widetilde{\X}^{-}_i(k),\\
&\forall i=1, \dots, n, \ \ s\in\Z\backslash\{0\}, \ \ k\in\Z,
\end{aligned}
\end{eqnarray}
where $\varpi_i=\sqrt{-q}$ if $\g=\osp(1|2n)^{(1)}$ and $i=n$,  and $\varpi_i=1$ otherwise; $\vartheta_i=\frac{q_i+q_i^{-1}}{q_i-q_i^{-1}}\varpi_i^{-1}$ if $i=n$, and $\vartheta_i=1$ otherwise.
\end{theo}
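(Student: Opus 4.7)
The plan is to mirror the proof of Theorem \ref{them:v.o} with the substitution $\wp \to q$ throughout the Heisenberg part, and to carefully track the new normalisation factor $1/[2k]_{q_n}$ appearing in $\widetilde{E}^{\pm}_n(z)$ and $\widetilde{F}^{\pm}_n(z)$. First I would establish the irreducibility of the $\U_q(\widetilde{\eta})$-module $S(\eta^-)\otimes W$: since $S(\eta^-)$ is a Fock-type irreducible module over the Heisenberg subalgebra generated by the $\HH_i^q(s)$ (relation \eqref{eq:vo-hh-q}), and $W$ is irreducible under the lattice/Clifford factor, this reduces to the standard argument of \cite{Jn96}. Once the operators $\widetilde{X}^{\pm}_i(k)$ and $\HH_i^q(s)$ are shown to satisfy the Drinfeld relations, irreducibility of the whole module is automatic.

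Next, I would verify the defining relations of the Drinfeld realisation in Proposition \ref{prop:dr} in the form of Lemma \ref{lem:dr-f}. The $\h$-$\h$ relation \eqref{eq:hh} is immediate from \eqref{eq:vo-hh-q}. The $\h$-$\ef$ relations \eqref{eq:hx} follow from direct commutator computations using the exponential form of $\widetilde{E}^{\pm}_i(z)$ and $\widetilde{F}^{\pm}_i(z)$, together with the $z^{\alpha}$-action on $\C[\Q]$; the extra factor $1/[2k]_{q_n}$ at $i=n$ is precisely what is needed so that the coefficient of $z^k$ on both sides of the $\h_{n,r}$-$\ef_{n,s}$ relation involves $u_{n,n,r}$ rather than $u_{n,n,r}\cdot(q^r+q^{-r})$. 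For the key $\e$-$\f$ relation \eqref{eq:xx-f}, I would compute the normal ordered products $\widetilde{X}^+_i(z)\widetilde{X}^-_j(w)$ and their permuted counterparts, extract the pole structure using the Baker--Campbell--Hausdorff formula, convert this into $\delta$-distributions evaluated at $z/w = q^{\pm 1}$, and read off the coefficient; the constant $\vartheta_i$ is determined by matching this to the right-hand side, with the $(q_n+q_n^{-1})/(q_n-q_n^{-1})$ factor at $i=n$ arising from $[2k]_{q_n}/[k]_{q_n}$ arithmetic combined with the $K(z)$-contractions (for $\g=\Sl(1|2n)^{(2)}$) or the bosonic factor alone (for $\g=\osp(1|2n)^{(1)}$).

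For the Serre relations, I would use the two-cocycle $C$ on $\widehat{\Q}$ to compute the sign produced by exchanging $\widetilde{T}^+_i(z)\widetilde{T}^+_j(w)$, replacing the role of the $\Phi_i$ operators in Theorem \ref{them:v.o}. The OPE computation for
\[
\widetilde{X}^+_i(z)\widetilde{X}^+_j(w)=\,:\widetilde{X}^+_i(z)\widetilde{X}^+_j(w):\,\cdot\,g^{-1}_{ij}(w/z)\cdot\text{(power of $z,w$)}
\]
then reduces each Serre identity (cases (A)--(E) of Lemma \ref{lem:dr-f}) to a rational-function identity in $z_1,\dots,z_\ell,w$, which vanishes after symmetrisation by the standard manipulation of $q$-binomials. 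Here the cocycle condition $C(\alpha_i,\alpha_j)C(\alpha_j,\alpha_i) = (-1)^{(\alpha_i,\alpha_j)+(\alpha_i,\alpha_i)(\alpha_j,\alpha_j)}$ supplies the signs needed to make the Serre sums collapse.

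The main obstacle will be the $i=n$ case. With the normalisation $(\alpha_n,\alpha_n)=1$ chosen in this section and the Clifford/Ramond contributions shifted, one has to identify which substitution ($\wp\to q$ or the $[2k]_{q_n}$ denominator) is responsible for each factor in $\varpi_i$ and $\vartheta_i$. In particular the appearance of $\sqrt{-q}$ in $\varpi_n$ for $\g=\osp(1|2n)^{(1)}$ encodes the spinorial character of $k_n$ (cf.\ Definition \ref{def:type-s}), and showing that this is exactly the scaling required for the right-hand side of \eqref{eq:xx-f} to produce integer powers of $z/w$ in $\delta(q^{\pm 1}z/w)$ (rather than the half-integer powers appearing in Theorem \ref{them:v.o}) is the most delicate point. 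Once the cases $i=n,j=n$ and $i=n,j\ne n$ are verified, the remaining relations follow by the same computations as in the proof of Theorem \ref{them:v.o}, with $\wp$ systematically replaced by $q$ in the scalars.
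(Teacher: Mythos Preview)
Your proposal is correct and follows essentially the same approach as the paper: the paper's proof consists of the single sentence ``Similar arguments as those in the proof of Theorem \ref{them:v.o} can prove the following result,'' and your plan is precisely a detailed elaboration of what those similar arguments entail (the $\wp\to q$ substitution in the Heisenberg commutators, the $[2k]_{q_n}$ denominator at $i=n$, and the cocycle $C$ on $\widehat{\Q}$ replacing the $\Phi_i$ operators). One small caution: your remark that $\varpi_n=\sqrt{-q}$ forces ``integer powers of $z/w$ in $\delta(q^{\pm 1}z/w)$'' is not quite the right bookkeeping---the factor $\rho_{z,w}=(z/w)^{1/2}$ in \eqref{eq:xx-f} persists for $\g=\osp(1|2n)^{(1)}$, $i=n$, so half-integer powers still appear; the role of $\varpi_n$ is rather to match the eigenvalue of $\qk_n$ on the vacuum, not to eliminate the half-integer shift.
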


\begin{rem}\label{rem:algebra automorphism} The vertex operator representation in Theorem \ref{them:v.o-q} can be changed to that in Theorem \ref{them:v.o} by the following automorphism of $\Uq(\g)$:
\[\begin{aligned}
&\ga\mapsto -\ga,\quad \e_{i,k}\mapsto \e_{i,k},\quad \f_{i,k}\mapsto (-1)^k\f_{i,k},\\
        &\qk_i^{\pm1/2}\mapsto \qk_i^{\pm1/2},\quad \h_{i,k}\mapsto (-1)^{|k|/2}\h_{i,k}, \quad \hh^{\pm}_{i,k}\mapsto (-1)^{\pm k/2}\hh^{\pm}_{i,k}.
\end{aligned}
\]
\end{rem}


\section{Finite dimensional irreducible representations}
In this section, we classify the finite dimensional irreducible representations of the quantum affine superalgebra $\Uq(\g)$ for the affine Lie superalgebras $\g$ in \eqref{eq:g}. We always assume that $\Uq(\g)$-modules are $\Z_2$-graded (cf. Remark \ref{rem:gradings}).

We choose the normalisation for the bilinear form on the weight space of $\g$ so that
$
(\alpha_n,\alpha_n)=1.
$



\subsection{Classification of finite dimensional simple modules}
We fix the triangular decomposition \eqref{eq:triangular-2} for $\Uq(\g)$, and consider highest weight $\Uq(\g)$-modules with respect to this
triangular decomposition.

Let $v_0$ be a highest weight vector in a highest weight  $\Uq(\g)$-module, then for all $i$ and $r$,
\begin{eqnarray}\label{eq:def-hw-vector}
\begin{aligned}
\e_{i,r}\cdot v_0=0,\quad \hh^{\pm}_{i,r}\cdot v_0=\up^{\pm}_{i,r}v_0,\quad \ga^{1/2}\cdot v_0=\up^{1/2}v_0,
\end{aligned}
\end{eqnarray}
for some scalars $\up^{\pm}_{i,r}$ and $\up^{1/2}$, where $\up^{1/2}$ is invertible and $\up^{+}_{i,0}\up^{-}_{i,0}=1$. We define the following formal power series in a variable $x$
\[
\up^{+}_i(x):=\sum_{r=0}^{\infty}\up^{+}_{i,r}x^r, \quad   \up^{-}_i(x):=\sum_{r=0}^{\infty}\up^{-}_{i,-r}x^{-r}, \quad \forall i.
\]

A $\Uq(\g)$-module is said to be at level $0$ if $\gamma$ acts by $\pm\rm{id}$.

By considering the commutation relations of $\hh^{\pm}_{i,r}$,
it is easy to show  \cite{CP0, CP1}
that finite dimensional modules must be at level $0$.
The proof of \cite[Proposition 3.2]{CP0} can be adapted verbatim to prove the following result.
\begin{prop}\label{prop:f.m-hwr}
Every finite dimensional simple $\Uq(\g)$-module  is a level $0$ highest weight module
with respect to the triangular decomposition \eqref{eq:triangular-2}.
\end{prop}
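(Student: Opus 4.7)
The plan is to mirror the argument of \cite[Proposition~3.2]{CP0}, with only minor bookkeeping to accommodate the $\Z_2$-grading and the super form of the Drinfeld relations.

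\textbf{Step 1 (level zero).} Since $\gamma^{\pm 1/2}$ is central, it acts on the finite-dimensional simple module $V$ by a scalar. Setting $s=-r$, $i=j$ in \eqref{eq:hh} yields
\[
[\h_{i,r},\h_{i,-r}] = \frac{u_{i,i,r}(\gamma^{r}-\gamma^{-r})}{r(q_i-q_i^{-1})^2}.
\]
Inspection of \eqref{eq:u-def} shows that $u_{i,i,r}\neq 0$ for infinitely many $r$ in each of the three cases. If $\gamma^{2}\neq 1$, then for some such $r$ the right-hand side is a nonzero scalar and the operators $\h_{i,\pm r}$ would generate a Heisenberg algebra, which has no nontrivial finite-dimensional representation. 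Since irreducibility and the exchange relation \eqref{eq:xx} forbid all $\h_{i,r}$ from acting as zero (unless $V$ collapses to a trivial module, which is already at level $0$), we conclude $\gamma^{2}=1$, i.e.\ $V$ is at level $0$.

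\textbf{Step 2 (highest weight vector).} With $\gamma=\pm 1$, the right-hand side of \eqref{eq:hh} vanishes, so the whole of $\U_q^0$ is commutative. Decompose $V$ into generalised weight spaces for the Chevalley Cartan $\{k_0^{\pm 1},\dots,k_n^{\pm 1}\}$; this is legitimate since the $k_i$ are pairwise commuting invertible operators on a finite-dimensional space, after passing if necessary to an extension of $\C(q^{1/2})$ to accommodate the spinorial eigenvalues allowed by Definition~\ref{def:type-s}. The relations $k_i\e_{j,r}k_i^{-1}=q_i^{a_{ij}}\e_{j,r}$ show that each $\e_{j,r}$ strictly raises the weight in any dominant ordering compatible with $\Pi$. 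Choose $\lambda$ maximal in such an ordering: then $V_\lambda$ is nonzero, stable under $\U_q^0$ (which commutes with all $k_i$), and annihilated by every $\e_{j,r}$. Since $\U_q^0$ acts on the finite-dimensional space $V_\lambda$ by pairwise commuting operators, we can extract a joint eigenvector $v_0$ for the $\hh^{\pm}_{i,r}$; Galois descent on the simple subquotient generated by $v_0$ returns the data \eqref{eq:def-hw-vector} over $\C(q^{1/2})$.

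\textbf{Step 3 (generation).} From the triangular decomposition \eqref{eq:triangular-2} we obtain
\[
\Uq(\g)\,v_0 = \U_q^{-}\U_q^{0}\U_q^{+}\,v_0 = \U_q^{-}\,v_0,
\]
and simplicity of $V$ forces $V=\U_q^{-}v_0$, so $V$ is a level $0$ highest weight module with respect to \eqref{eq:triangular-2}.

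\textbf{Anticipated obstacle.} The only place where the transcription from \cite{CP0} is not literally verbatim is the eigenspace analysis in Step~2: the super setting allows type-\textbf{s} eigenvalues of the form $\pm\sqrt{-1}q^{m+1/2}$ (cf.\ Definition~\ref{def:type-s}), so semisimplicity of the Chevalley Cartan and the diagonalisation of $\U_q^0|_{V_\lambda}$ must be carried out over a suitable extension of $\C(q^{1/2})$, with a Galois-descent argument to return to the base field. Everything else — the Heisenberg obstruction, the weight ordering, and the triangular generation — carries over without change from the purely even setting.
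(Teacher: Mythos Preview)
Your proposal is correct and is exactly what the paper does: its entire proof of this proposition is the single sentence that \cite[Proposition~3.2]{CP0} adapts verbatim, and you have written out that adaptation. (The parenthetical in Step~1 about the $\h_{i,r}$ possibly acting as zero is unnecessary --- the trace identity $0=\operatorname{tr}[\h_{n,1},\h_{n,-1}]=c\cdot\dim V$ already forces $c=0$ directly --- and your base-field caution in Step~2 goes beyond what either the paper or \cite{CP0} bothers with, but neither point affects correctness.)
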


The following theorem is the main result of this section.  Its proof will be given in Section \ref{sect:proof-fd}.

\begin{theo}\label{theo-finite module}
Let $\g=\osp(1|2n)^{(1)}$, $\Sl(1|2n)^{(2)}$ and $\osp(2|2n)^{(2)}$. A simple $\Uq(\g)$-module $V$ is finite dimensional if and only if it can be twisted by some automorphism $\iota_\varepsilon$ into a level $0$ simple highest weight
module satisfying the following conditions:
there exist polynomials $P_i\in\C[x]$ $(i=1, 2, \dots, n)$ with constant term $1$ such that
\begin{eqnarray}\label{eq:hw-polys}
\begin{aligned}
&\up^{+}_i(x) =t_i^{c_i\cdot {\rm deg} P_i}\frac{P_i\left((-1)^{n-i}t_i^{-2c_i}x^{d_i}\right)}{P_i\left((-1)^{n-i}x^{d_i}\right)}=\up^{-}_i(x),
\end{aligned}
\end{eqnarray}
where the equalities should be interpreted as follows: the left side is equal to the middle expression expanded at $0$, and the right side to that expanded at $\infty$.
In the above, $t_i = \left(\sqrt{-1}q^{1/2}\right)^{(\alpha_i, \alpha_i)}$,  and $c_i$ and $d_i$ are defined by
\[\begin{array}{l l}
\g= \osp(1|2n)^{(1)}: & d_i = 1, \quad c_i=\begin{cases} 1, & i\neq n, \\
                                                             2, & i=n;
                                       \end{cases}\\
\g=\Sl(1|2n)^{(2)}: &d_i=c_i=1;\\
\g=\osp(2|2n)^{(2)}:  &d_i=c_i=\begin{cases} 1, & i=n, \\
                                                             2, & i\neq n.
                                       \end{cases}
\end{array}
\]
\end{theo}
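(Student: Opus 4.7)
The proof plan is to follow the rank-one reduction strategy of Chari--Pressley \cite{CP0, CP1, CP2}, combined with the quantum correspondences between affine Lie superalgebras and ordinary affine algebras developed in \cite{XZ, Z2}.

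For \emph{necessity}, by Proposition~\ref{prop:f.m-hwr} a finite-dimensional simple $\Uq(\g)$-module $V$ is a level $0$ highest weight module with respect to the triangular decomposition \eqref{eq:triangular-2}, with highest weight vector $v_0$ whose data $\up^{\pm}_{i,r}$ are as in \eqref{eq:def-hw-vector}. For each fixed $i$, let $\U_i \subset \Uq(\g)$ denote the subalgebra generated by $\e_{i,r}, \f_{i,r}, \h_{i,s}, \qk_i^{\pm 1/2}$. The cyclic submodule $\U_i \cdot v_0$ is finite dimensional and highest weight, so it suffices to recover the polynomial $P_i$ from this rank-one piece. For $i < n$ the relevant Drinfeld relations reduce to those of the Drinfeld realization of an ordinary quantum affine $\Sl_2$ at parameter $q_i$; the twisted case $\g = \osp(2|2n)^{(2)}$ restricts the label set $\cI_\g$ to force $s$ even, and this is precisely what produces the exponent $d_i = 2$ in the statement. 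For $i = n$ the root $\alpha_n$ is odd and the Heisenberg parameter $u_{n,n,r}$ in \eqref{eq:u-def} is non-standard, but the quantum correspondences of \cite{XZ, Z2} identify this rank-one sub-structure with an ordinary quantum affine $\Sl_2$ once one allows the substitution $q \mapsto -q$ and a rescaling by $\sqrt{-1}$: this accounts for the factors $t_i = (\sqrt{-1}\, q^{1/2})^{(\alpha_i, \alpha_i)}$ and for the sign $(-1)^{n-i}$, while $c_n = 2$ in the $\osp(1|2n)^{(1)}$ case records the doubling inherent in $u_{n,n,r}$ containing both $q_n^{4r}$ and $q_n^{2r}$ terms. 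Applying the Chari--Pressley classification to each $\U_i \cdot v_0$ then yields a monic polynomial $P_i$ with constant term $1$ such that $\up^{\pm}_i(x)$ take the rational form \eqref{eq:hw-polys}.

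For \emph{sufficiency}, given polynomials $P_i$ of the prescribed form, factor each $P_i = \prod_j (1 - a_{i,j} x)$ over $\C$ and build a finite-dimensional module realising the corresponding highest weights. The natural recipe is to construct a family of \emph{fundamental} modules (one per linear factor) as twists of pullbacks of evaluation representations of the ordinary quantum affine $\Sl_2$ algebras arising from the correspondences, then tensor them via an appropriate Hopf structure and pass to the irreducible quotient containing a highest weight vector with the given data. A final application of an automorphism $\iota_\varepsilon$ of \eqref{eq:auto-1} adjusts the resulting module to match the prescribed weight data.

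The \emph{main obstacle} is the sufficiency direction at the odd simple root $\alpha_n$: the super Serre relations (D) and (E) in Proposition~\ref{prop:dr} are more restrictive than their ordinary counterparts, so the twist-of-pullback construction requires a careful verification that the quantum correspondences of \cite{XZ, Z2} are compatible with these higher-order relations on finite-dimensional modules, and that tensor products of the fundamental modules remain finite dimensional. A related difficulty is bookkeeping of the $\sqrt{-1}$ factors: some natural candidate modules are type-\textbf{s} in the sense of Definition~\ref{def:type-s}, with $k_n$-eigenvalues of the form $\pm \sqrt{-1}\, q^{m+1/2}$, and the twist by $\iota_\varepsilon$ in the statement is needed precisely to absorb such signs and reconcile the module with the highest weight encoded by the $P_i$.
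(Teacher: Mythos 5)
The main gap is in your sufficiency direction. Your recipe --- fundamental modules obtained as ``twists of pullbacks of evaluation representations of the ordinary quantum affine $\Sl_2$ algebras arising from the correspondences'', then tensored and cut down to an irreducible quotient --- has no actual construction behind it: there is no algebra homomorphism from $\Uq(\g)$ to a quantum affine $\Sl_2$ along which to pull anything back, no evaluation homomorphisms are available for these superalgebras, and the coproduct is not expressed in terms of the Drinfeld generators in any usable way, so finite-dimensionality of the proposed tensor products and the highest weight data of their quotients are exactly the assertions that need proof and are not supplied (you flag this as the ``main obstacle'' but do not resolve it). The paper avoids the issue entirely: by Proposition \ref{prop:corr} there is an isomorphism of associative algebras $\varphi:\fU_q(\g)\to\fU_{-q}(\g')$ between the smash products $\Uq(\g)\sharp K\mathrm{G}$ and $\U_{-q}(\g')\sharp K\mathrm{G}'$, with $\g'=A_{2n}^{(2)},B_n^{(1)},D_{n+1}^{(2)}$ as in Table \ref{tbl:g}. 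This identifies the module categories, so \emph{both} directions of the theorem --- in particular the existence of the finite-dimensional modules with prescribed data --- are imported wholesale from the Chari--Pressley classification (Proposition \ref{prop-fin.dim}) at $t=-q$, the only work being to track how highest weight lines lift to and restrict from the smash products and how $\varphi$ transforms $\hh^{\pm}_{i,r}$, which produces the factors $(-1)^{(n-i)r\epsilon}$ and hence \eqref{eq:hw-polys}. You invoke the correspondence only ``locally at the odd node'', which is precisely where your argument stops producing modules.

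On necessity, a rank-one reduction in the style of \cite{CP0,CP1,CP2} could in principle be carried out (the paper itself says the direct method works), but your bookkeeping is misattributed. The sign $(-1)^{n-i}$ in \eqref{eq:hw-polys} occurs for \emph{all} $i$, including even nodes $i<n$, where the rank-one subalgebra is an honest $\U_{q_i}(\widehat{\Sl}_2)$ and a purely local analysis yields polynomial conditions in $q_i$ with no such sign and with $q_i^{\deg}$ rather than $t_i^{c_i\deg}$; in the paper this sign comes globally from the factors $\prod_{k>i}\sigma'_k$ and $(-1)^{(n-i)r\epsilon}$ in $\varphi$, i.e.\ from the chain of nodes between $i$ and $n$, not from the odd node itself, and you would still have to prove that your local normalisation agrees with \eqref{eq:hw-polys} up to an admissible twist $\iota_\varepsilon$ of \eqref{eq:auto-1}. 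Moreover, at $i=n$ for $\osp(1|2n)^{(1)}$ the relevant rank-one structure corresponds to the twisted algebra $A_2^{(2)}$ (this is what $c_n=2$ records, and it is the case treated in \cite{CP2}), not to an ordinary quantum affine $\Sl_2$ with $q\mapsto -q$ as you state.
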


As an immediate corollary of Theorem \ref{theo-finite module}, we have the following result.
\begin{coro}\label{prop:f.m-type}
Every finite dimensional simple  $\Uq(\g)$-module can be obtained from  a level $0$
type-{\bf {1}} or type-{\bf {s}} module  by twisting $\Uq(\g)$ with some automorphism given in \eqref{eq:auto-1}.
\end{coro}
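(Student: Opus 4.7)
The plan is to deduce the corollary directly from Theorem \ref{theo-finite module} by reading off the $\qk_i$-eigenvalues from the highest-weight polynomial data and matching them against the patterns of Definition \ref{def:type-s}. Let $V$ be a finite dimensional simple $\Uq(\g)$-module. By Theorem \ref{theo-finite module}, there is an automorphism $\iota_\varepsilon$ of \eqref{eq:auto-1} such that $V^{\iota_\varepsilon}$ is a level $0$ highest-weight module (relative to \eqref{eq:triangular-2}) whose highest weights are given by the rational functions in \eqref{eq:hw-polys}. Since $V$ is a weight module and $\qe_j,\qf_j$ shift every $k_i$-eigenvalue by a factor $q_i^{\pm a_{ij}/2}\in\{q_i^m\mid m\in\Z\}$, the type-{\bf 1}/type-{\bf s} condition need only be verified on the highest-weight vector $v_0$.

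For $1\le i\le n$, the identity $\hh^+_{i,0}=\qk_i$ coming from \eqref{eq:hh-hat}, together with \eqref{eq:def-hw-vector} and \eqref{eq:hw-polys}, gives
\[
\qk_i\, v_0 \;=\; \up^+_{i,0}\, v_0 \;=\; t_i^{c_i\deg P_i}\, v_0 \;=\; \bigl(\sqrt{-1}\,q^{1/2}\bigr)^{(\alpha_i,\alpha_i)\,c_i\,\deg P_i}\, v_0.
\]
Using the normalisation $(\alpha_n,\alpha_n)=1$ and the values of $c_i$ listed in Theorem \ref{theo-finite module}, a short inspection shows that $(\alpha_i,\alpha_i)\,c_i$ is an even integer for every $i$ when $\g=\osp(1|2n)^{(1)}$, and is even for $i<n$ but equals $1$ for $i=n$ when $\g=\Sl(1|2n)^{(2)}$ or $\osp(2|2n)^{(2)}$. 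Hence in the untwisted case every $\qk_i$-eigenvalue equals $\pm q^m$ for some integer $m$, while in the twisted cases only $\qk_n$ can carry the spinoral factor $\sqrt{-1}\,q^{1/2}$, with its eigenvalue lying in $\{\pm q^m\}\cup\{\pm\sqrt{-1}\,q^{j+1/2}\}$ according to the parity of $\deg P_n$.

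To handle $k_0$, which is not a Drinfeld generator, I would use the isomorphism with the Drinfeld--Jimbo presentation established in \cite{XZ1}, together with the affine central-element relation $\prod_{i=0}^n k_i^{a_i^\vee}=\gamma$, where $a_i^\vee$ are the dual Kac labels of $\g$. Since $V^{\iota_\varepsilon}$ is at level $0$, $\gamma$ acts as $\pm 1$, so this identity expresses the $k_0$-eigenvalue on $v_0$ in terms of the $\qk_i$-eigenvalues ($i\ge 1$) and a sign. A case-by-case check then shows that for $\osp(1|2n)^{(1)}$ and $\Sl(1|2n)^{(2)}$ the $k_0$-eigenvalue is always of the form $\pm q^m$, while for $\osp(2|2n)^{(2)}$ the odd affine root can independently supply a spinoral factor $\sqrt{-1}\,q^{1/2}$, matching exactly the ``either, or, both'' alternative in Definition \ref{def:type-s}.

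Finally, choose signs $\varepsilon'_i\in\{\pm 1\}$ to absorb the residual $\pm 1$'s on each $k_i$-eigenvalue. The composite $\iota_{\varepsilon'}\circ\iota_\varepsilon$ is again of the form \eqref{eq:auto-1}, and the twisted module $V^{\iota_{\varepsilon'}\circ\iota_\varepsilon}$ now has every $k_i$-eigenvalue in the precise form required by Definition \ref{def:type-s}, i.e.\ is type-{\bf 1} or type-{\bf s}. The main obstacle I anticipate is the bookkeeping in the $k_0$-step: tracking the dual Kac labels $a_i^\vee$ for each of the three affine superalgebras and checking that the level-$0$ central-element identity cleanly sorts the $k_0$-eigenvalue into the correct bin of Definition \ref{def:type-s}, especially in the $\osp(2|2n)^{(2)}$ case where both $k_0$ and $k_n$ may carry spinoral factors.
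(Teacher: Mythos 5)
Your proposal is correct and is essentially the paper's own (implicit) argument: the corollary is read off from Theorem \ref{theo-finite module}, since $k_i$ acts on the highest weight vector by $\up^{+}_{i,0}=t_i^{c_i\,{\rm deg}\,P_i}$ as in \eqref{eq:hw-polys}, every other eigenvalue differs from this by an integer power of $q$, $k_0$ is pinned down by the central element at level $0$, and the residual signs are absorbed by a further twist of the form \eqref{eq:auto-1}. Two harmless slips: conjugation by $k_i$ rescales $e_j$ by $q_i^{\pm a_{ij}}$ (the factor $q_i^{\pm a_{ij}/2}$ belongs to $k_i^{\pm 1/2}$), and the central-element identity should use the marks of the null root $\delta=\sum_i a_i\alpha_i$, i.e.\ $\gamma=\prod_{i=0}^{n}k_i^{a_i}$ in the convention $k_i=q_i^{h_i}$, rather than the dual Kac labels --- with the correct labels your case-by-case check of $k_0$ against Definition \ref{def:type-s} goes through unchanged.
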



\begin{rem} \label{rem:gradings}
Any non $\Z_2$-graded simple highest weight $\Uq(\g)$-module can be regarded as graded by simply assign a parity to its highest weight vector.
\end{rem}


\subsection{Proof of Theorem \ref{theo-finite module}}\label{sect:proof-fd}
The theorem can be proven directly by using the method of \cite{CP1, CP2}. However, there is an easier approach based on quantum correspondences between affine Lie superalgebras developed in \cite{Z92b, Z2, XZ}. The
quantum correspondences allow one to translate results on finite dimensional simple modules of ordinary quantum affine algebras in \cite{CP1, CP2} to the quantum affine superalgebras under consideration. We will follow the latter approach here.

\subsubsection{Facts on ordinary quantum affine algebras}
Corresponding to each $\g$ in \eqref{eq:g},  we have an ordinary (i.e., non-super) affine Lie algebra $\g'$  given in Table \ref{tbl:g}, which has the same Cartan matrix as $\g$.
\vspace{-1mm}
\begin{table}[h]
\renewcommand{\arraystretch}{1.2}
\caption{Table 1. Quantum correspondence}
\label{tbl:g}
\begin{tabular}{c|c|c|c}
\hline
$\g$ & $\osp(1|2n)^{(1)}$    & $\Sl(1|2n)^{(2)}$ & $\osp(2|2n)^{(2)}$  \\
\hline
$\g'$ & $A_{2n}^{(2)}$   & $B_n^{(1)}$  & $D_{n+1}^{(2)}$   \\
\hline
\end{tabular}
\end{table}

\noindent
We denote by $\{\alpha'_1, \dots, \alpha'_n\}$ the set of simple roots realising the Cartan matrix of $\g'$, and normalize the  bilinear form on the weight space of $\g'$ so that  $(\alpha'_n,\alpha'_n)=1$

Let $\x^\pm_{j, r}$, $\hh'^{\pm}_{i,r}$, and $\gamma'^{\pm1/2}$
($1\le i, j\ge n$, \ $r\in\Z$)  be the generators of the quantum affine algebra $\U_t(\g')$ over $\C(t^{1/2})$ (see \cite{CP1, CP2} for details).
Highest weight $\U_t(\g')$-modules are defined in a similar way as  for $\Uq(\g)$ earlier.
A highest weight $\U_t(\g')$ is generated by a highest weight vector $v'_0$, which satisfies
\begin{eqnarray}\label{eq:V'-hwv}
\x^{+}_{i,r}\cdot v'_0=0,\quad \hh'^{\pm}_{i,r}\cdot v'_0=\up'^{\pm}_{i,r}v'_0,\quad \ga'^{1/2}\cdot v'_0={\up'}^{1/2} v'_0,
\end{eqnarray}
where $\up'^{\pm}_{i,r}\in\C$, with ${\up^\prime}^{1/2}\in\C^*$ and ${\up^\prime}^{+}_{i,0}{\up^\prime}^{-}_{i,0}=1$. The module is at level $0$ if $\up'=\pm1$.

Recall that weight modules over $\U_t(\g')$ can always be twisted to type-{\bf 1} modules by automorphisms analogous to \eqref{eq:auto-1}.
The following result is proved in \cite{CP1,CP2}.
\begin{prop}[\cite{CP1,CP2}]\label{prop-fin.dim}
Let $\g'=A_{2n}^{(2)}, B_n^{(1)}, D_{n+1}^{(2)}$.  Every finite dimensional simple $\U_t(\g')$-module is a highest weight module at level $0$.
A level $0$ simple highest weight  $\U_t(\g')$-module of type-{\bf 1} is finite dimensional if and only if there exist polynomials $Q_i\in\C[x]$ ($1\le i\le n$) with constant term $1$ such that
\begin{align}\label{eq:ordinary-hw-polys}
\sum_{r=0}^{\infty}\up'^{+}_{i,r}x^r=t_i^{c_i\cdot {\rm deg} Q_i}\frac{Q_i\left(t_i^{-2c_i}x^{d_i}\right)}{Q_i(x^{d_i})}=\sum_{r=0}^{\infty}\up^{-}_{i,-r}x^{-r},
\end{align}
which holds in the same sense as \eqref{eq:hw-polys}.
Here $t_i=t^{(\alpha'_i,\alpha'_i)/2}$, and the constants $c_i$ and $d_i$ are those defined in Theorem \ref{theo-finite module} for the affine superalgebra $\g$ corresponding to $\g'$ in Table \ref{tbl:g}.
\end{prop}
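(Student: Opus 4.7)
The plan is to follow the strategy of Chari--Pressley \cite{CP0,CP1,CP2}, reducing the statement to a rank--one classification and then constructing modules with prescribed Drinfeld polynomials via tensor products of fundamental representations. First I would establish the two preliminary assertions independently of the polynomial characterisation: (i) every finite dimensional simple module is at level $0$, and (ii) it is a highest weight module with respect to the Drinfeld triangular decomposition analogous to \eqref{eq:triangular-2}. For (i), if $\ga'$ acts by a scalar $\up'$ with $\up'^{2}\ne 1$, then the Heisenberg-type relation \eqref{eq:hh} (in its non-super version) shows the $\hh'_{i,r}$ generate an infinite-dimensional commutative subalgebra acting nontrivially, contradicting finite dimensionality. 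For (ii), a standard weight-lattice argument using the $\U_t^0$ eigenspace decomposition (as in \cite[Prop.~3.2]{CP0}) produces a vector killed by all $\x^+_{i,r}$. These reductions reduce the problem to characterising which eigenvalue sequences $\{\up'^{\pm}_{i,r}\}$ yield finite dimensional modules.

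Next, for the necessity direction of \eqref{eq:ordinary-hw-polys}, I would exploit rank--one subalgebras. For each node $i$, the subalgebra $\U_t^{(i)}\subseteq \U_t(\g')$ generated by $\x^{\pm}_{i,r}$, $\hh'^{\pm}_{i,r}$, $\ga'^{\pm 1/2}$ is isomorphic either to $\U_{t_i}(\widehat{\mathfrak{sl}_2})$ (when the $i$--th diagonal block of the affine Cartan matrix is untwisted $A_1^{(1)}$) or to $\U_{t_i}(A_2^{(2)})$ (when the $i$--th column shows the twisted short-root block, which governs the values of $c_i,d_i$ recorded in the statement). The rank--one case of Drinfeld polynomials---proved for $\U_t(\widehat{\mathfrak{sl}_2})$ in \cite{CP1} and extended to $\U_t(A_2^{(2)})$ in \cite{CP2}---forces $\up'^{+}_i(x)$ to be the expansion at $0$ of a rational function whose numerator and denominator are related by the scaling $x^{d_i}\mapsto t_i^{-2c_i}x^{d_i}$, with equality of the $\up'^{\pm}$ expansions holding in the sense indicated. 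Collecting the $P_i^{(i)}$ from each rank--one restriction gives the polynomials $Q_i$ with the required form \eqref{eq:ordinary-hw-polys}.

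For the sufficiency direction, I would construct, for every node $i$ and every $a\in\C^*$, a finite dimensional simple type--$\mathbf{1}$ module $V(\varpi_i,a)$ whose Drinfeld polynomials are $Q_i(x)=1-ax^{d_i}$ and $Q_j(x)=1$ for $j\ne i$. When $\g'=B_n^{(1)}$, these fundamental modules are supplied by evaluation homomorphisms $\U_t(\widehat{B_n})\to\U_t(B_n)$ applied to the fundamental $\U_t(B_n)$-modules twisted by the spectral parameter $a$. For the twisted cases $A_{2n}^{(2)}$ and $D_{n+1}^{(2)}$ I would use the construction of fundamental modules via the Kirillov--Reshetikhin / $R$--matrix procedure, or alternatively the restriction of fundamental modules of a larger untwisted algebra to the twisted subalgebra fixed by a diagram automorphism, with the shift $x\mapsto x^{d_i}$ reflecting the order of the twist. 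Given an arbitrary $(Q_1,\dots,Q_n)$ with each $Q_i(x)=\prod_k(1-a_{i,k}x^{d_i})$, one factors each polynomial and forms the tensor product $\bigotimes_{i,k}V(\varpi_i,a_{i,k})$; for a generic ordering of spectral parameters this module is cyclic on the tensor product of highest weight vectors, and the cyclic submodule has Drinfeld polynomials multiplicative in the factors, namely the prescribed $Q_i$. Its simple quotient is then the desired module.

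The main obstacle is the rank--one classification in the twisted block corresponding to $\U_{t_i}(A_2^{(2)})$, because the natural Drinfeld polynomial has degree in $x^2$ and one must verify that the numerator/denominator identification in \eqref{eq:ordinary-hw-polys} correctly absorbs the sign coming from the short root (this is precisely where the exponents $c_i,d_i$ enter). Equally delicate is the existence of the twisted fundamental modules $V(\varpi_i,a)$ and the proof that a tensor product of them, with generic spectral parameters, has a simple quotient whose Drinfeld polynomials multiply as expected; this requires a careful analysis of the $R$--matrix and of cyclicity under the ordering of spectral parameters, and is the step where most of the technical work in \cite{CP1,CP2} is concentrated.
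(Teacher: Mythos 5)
This proposition is not proved in the paper at all: it is imported verbatim from Chari--Pressley (the level-$0$ and highest-weight statements from \cite{CP0}, the Drinfeld-polynomial classification from \cite{CP1,CP2}), and the paper's ``proof'' is the citation itself. So there is no internal argument to compare against; what you have written is essentially a reconstruction of the Chari--Pressley strategy that those references carry out: trace/Heisenberg argument for level $0$, the weight argument of \cite[Prop.~3.2]{CP0} for the highest-weight property, rank-one reduction to $\U_{t_i}(A_1^{(1)})$ and $\U_{t_i}(A_2^{(2)})$ for necessity (this is exactly where the constants $c_i,d_i$ arise), and tensor products of fundamental modules for sufficiency. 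At that level of description your outline matches the cited proof.

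Two points in your sufficiency step would fail as written, however. First, evaluation homomorphisms $\U_t(\g')\to\U_t(\dot\g')$ exist only in type $A$; there is no evaluation map for $B_n^{(1)}$, so the fundamental modules $V(\varpi_i,a)$ in that case cannot be produced this way -- Chari and Pressley establish their existence by other means (and in the twisted cases of \cite{CP2} the construction is again direct, not a restriction of untwisted fundamental modules along the diagram automorphism, which at the quantum level is not an honest subalgebra relation compatible with finite-dimensional restriction). Second, the multiplicativity of Drinfeld polynomials and the cyclicity of tensor products for generic spectral parameters, which you invoke to assemble general $Q_i$, is precisely the technical core of \cite{CP1,CP2}; as a self-contained argument your sketch leaves it unproved. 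Since the statement is quoted from those papers anyway, deferring to them is legitimate here, but you should not present the evaluation-map or folding constructions as if they supplied the missing modules.
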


\subsubsection{Quantum correspondences}
Let $\mathrm{G}$ be the direct product of the groups $\Z_2$ generated by $\sigma_i(1\le i\le n)$ such that ${\sigma_i}^2=1$. We define a $G$-action on $\Uq(\g)$ by
\[\begin{aligned}
&\sigma_i\cdot\ef_{j, r}=(-1)^{(\alpha_i,\alpha_j)}\ef_{j, r},
\quad \sigma_i\cdot\hh^{\pm}_{j, r}= \hh^{\pm}_{j, r},
\quad  \sigma_i\cdot\gamma^{1/2}=\gamma^{1/2},
\end{aligned}\]
for all $i, j\ge 1$ and $r\in\Z$, and form the smash product
$\fU_q(\g):=\Uq(\g)\sharp{K}\mathrm{G}$, which is a Hopf superalgebra.
Similarly,  let
$\mathrm{G'}$ be the direct product of group $\Z_2$ generated by $\sigma'_i(1\le i\le n)$ with ${\sigma'_i}^2=1$, which acts on  $\U_t(\g')$ by
\[ \begin{aligned}
&\sigma'_i\cdot\x^\pm_{j, r}=(-1)^{(\alpha'_i,\alpha'_j)}\x^\pm_{j, r},
\quad \sigma'_i\cdot\hh'^{\pm}_{j, r}= \hh'^{\pm}_{j, r},
\quad  \sigma'_i\cdot{\gamma'}^{1/2}={\gamma'}^{1/2}.
\end{aligned}\]
We also form the smash product $\fU_t(\g'):=\U_t(\g')\sharp{K}\mathrm{G'}$ Hopf algebra.

Now set $t=-q$ (thus $t^{1/2}=\sqrt{-1}q^{1/2}$). The following result is a special case of \cite[Theorem 1.1]{XZ}.
\begin{prop}\cite{XZ, XZ1, Z2}\label{prop:corr}
For each pair $(\g, \g')$ in Table \ref{tbl:g},
there is an isomorphism $\varphi: \fU_q(\g)\longrightarrow \fU_{-q}(\g')$
of associative algebras given by
\begin{equation}\label{eq:dr-map}
\begin{aligned}
& \gamma^{1/2}\mapsto{\gamma'}^{1/2}, \quad
\sigma_i\mapsto \sigma'_i, \quad
{\hh}^\pm_{i,r} \mapsto (-1)^{(n-i) r \epsilon}\sigma'_i \hh'^{\pm}_{i, r},\\
&     \e_{i,r} \mapsto (-1)^{(n-i) r \epsilon} \left(\prod_{k=i+1}^{m+n}\sigma'_k\right)  \x^{+}_{i,r},
\quad \f_{i,r}  \mapsto (-1)^{(n-i) r \epsilon} \left(\prod_{k=i+1}^{m+n}\sigma'_k\right)  \x^{-}_{i,r},
\end{aligned}
\end{equation}
where $\epsilon=1/2$ if $\g=\osp(2|2n)^{(2)}$,  and  1 otherwise.
\end{prop}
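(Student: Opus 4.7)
The plan is to establish $\varphi$ as a well-defined algebra homomorphism by directly verifying each defining relation of the Drinfeld realisation of $\fU_q(\g)$ (Proposition \ref{prop:dr}, augmented with the smash product relations with $\mathrm{G}$), and then to observe that $\varphi$ is manifestly bijective on generating sets: every sign factor $(-1)^{(n-i)r\epsilon}$ is a unit and each $\sigma'_k$ is an involution, so an inverse homomorphism is obtained by reading \eqref{eq:dr-map} backwards once both-sided compatibility has been checked. Since the proposition is stated as a special case of \cite[Theorem 1.1]{XZ}, the detailed computations can be imported from that reference, with the only new ingredient here being the specialisation $t = -q$.

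The conceptual point is that the adjoined group $\mathrm{G}$ (resp.\ $\mathrm{G'}$) is designed precisely to absorb two sign discrepancies between the super and non-super sides: the super-signs $(-1)^{[x][y]}$ that appear whenever the odd generators $\ef_{n,r}$ occur in a commutator, and the extra $(-1)^r$ factors in the scalars $u_{i,j,r}$ of \eqref{eq:u-def} as compared with their non-super counterparts. Since $t^r = (-1)^r q^r$ when $t = -q$, the prefactors $(-1)^{(n-i)r\epsilon}$ in \eqref{eq:dr-map} are calibrated so that $u_{i,j,r}^{\g}$ matches $u_{i,j,r}^{\g'}|_{t=-q}$ up to the accumulated sign, while the products $\prod_{k=i+1}^{n}\sigma'_k$ convert super-brackets into ordinary ones via the anti-commutation rule $\sigma'_k \x^\pm_j = (-1)^{(\alpha'_k,\alpha'_j)} \x^\pm_j \sigma'_k$.

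Concretely I would verify the relations in the order: first \eqref{eq:hh}, which reduces to a purely numerical identity among the $u_{i,j,r}$ checked case-by-case from \eqref{eq:u-def}; next \eqref{eq:hx}, which combines the same scalar matching with the $\sigma'_i$-action on $\x^\pm_j$; then \eqref{eq:xx}, the most delicate single relation, where for $i=j=n$ the super anti-commutator $\e_n \f_n + \f_n \e_n$ must match the Drinfeld $e$-$f$ relation in $\U_{-q}(\g')$, with the discrepancy between $q_n - q_n^{-1}$ and $t_n - t_n^{-1} = \sqrt{-1}(q^{1/2}+q^{-1/2})$ absorbed into the residual $\sigma'_n$ factor in the image of $\hh^{\pm}_{n,r}$; and finally the Serre relations (A)--(E) of Proposition \ref{prop:dr}.

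The main obstacle will be the Serre relations, particularly the exotic cases (D) and (E) for $\g = \osp(1|2n)^{(1)}$ and $\osp(2|2n)^{(2)}$, which involve nested super-brackets with non-trivial scalars $q_n^{\pm 2}, q_n^{\pm 4}, q_n^{-6}$ and mixing of the currents $\ef_n$ and $\ef_{n-1}$. Here both sign discrepancies are active simultaneously, and each inner bracket $[x,y]_a$ must be carefully converted when $\varphi$ is applied and the accumulated $\sigma'$-factors are permuted to one side. By the design of $\varphi$, the outcome matches the standard Serre relations of the twisted quantum affine algebra $\U_{-q}(\g')$ for $\g' \in \{A_{2n}^{(2)}, B_n^{(1)}, D_{n+1}^{(2)}\}$; verifying this case-by-case is mechanical but requires diligent sign accounting, and the construction of the inverse is then immediate since every sign prefactor and every $\sigma'_k$ is its own inverse.
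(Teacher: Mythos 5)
The paper offers no internal proof of this proposition: it is quoted, at the specialisation $t=-q$, from \cite[Theorem 1.1]{XZ} and \cite[Theorem 3.5]{XZ1} (with the undetermined signs $o(i)$ of the latter made explicit), so your plan of ``importing the detailed computations from that reference'' is precisely what the paper does, and your outline of a relation-by-relation check of the Drinfeld presentation is a reconstruction of the strategy of the cited works rather than something carried out in this paper. In that sense the proposal is aligned with the paper's treatment.

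One caution if you intend to actually execute the verification rather than cite it: your description of the mechanism at the odd node is too loose to close the argument. The group-like involution $\sigma'_n$ cannot ``absorb'' the scalar discrepancy between $q_n-q_n^{-1}$ and $t_n-t_n^{-1}=\sqrt{-1}\,(q^{1/2}+q^{-1/2})$; a group element only ever contributes signs, and the scalar calibration comes from matching the structure constants $u_{i,j,r}$ of \eqref{eq:u-def} with their non-super counterparts at $t=-q$ (e.g.\ $t_n^{2r}=(-1)^r q_n^{2r}$), together with the prefactors $(-1)^{(n-i)r\epsilon}$ and the normalisation conventions for $\hh'^{\pm}_{i,r}$ used in \cite{XZ1}. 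Likewise, the conversion of the anticommutator $\e_{n,r}\f_{n,s}+\f_{n,s}\e_{n,r}$ into the ordinary Drinfeld bracket of $\x^{+}_{n,r}$ and $\x^{-}_{n,s}$ is more delicate than ``the $\sigma'$'s convert super-brackets into ordinary ones'': if the images of $\e_{n,r}$ and $\f_{n,s}$ carried identical $\sigma'$-factors and nothing else, an anticommutator would again map to an anticommutator, so the bookkeeping only closes once the $\sigma'_i$ attached to the image of $\hh^{\pm}_{i,r}$, the exact placement of the group elements (note also that the upper limit $m+n$ in the product in \eqref{eq:dr-map} is inherited from the more general setting of \cite{XZ}), and the scalar normalisations of \cite{XZ1} are all taken into account simultaneously. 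As a pointer to the reference your proposal is fine; as a standalone argument the sign/scalar mechanism you describe would not yet go through as stated.
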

The map $\varphi$ becomes a Hopf superalgebra isomorphism up to picture changes and Drinfeld twists; see \cite[Theorem 1.2]{XZ} for details.
Note that the proposition is \cite[Theorem Theorem 3.5]{XZ1}
 stated for $\hh_{i,r}^\pm$ instead of $\kappa_{i,s}$  and with the $o(i)$ there worked out explicitly.

\begin{rem}
The same type of Hopf superalgebra isomorphisms, referred to as quantum correspondences in \cite{XZ}, exist for a much wider range of affine Lie superalgebras
\cite[Theorem 1.2]{XZ}.
Some of them appear as S-dualities in string theory as discovered in \cite{MW}.
\end{rem}

\subsubsection{Proof of Theorem \ref{theo-finite module}}
With the preparations above, we can now prove Theorem \ref{theo-finite module}.
By using Proposition \ref{prop:corr}, we can identify the categories of $\fU_q(\g)$-modules and $\fU_{-q}(\g')$-modules. Then Theorem \ref{theo-finite module} is equivalent to Proposition \ref{prop-fin.dim} under this identification.  Let us describe this in more detail.

If a $\fU_{-q}(\g')$-module is generated by a $\U_{-q}(\g')$ highest weight vector that is an eigenvector of the $\sigma'_i$,
it restricts to a simple $\U_{-q}(\g')$-module.  All highest weight $\U_{-q}(\g')$-modules can be obtained this way, but note that different $\fU_{-q}(\g')$-modules of this type may restrict to the same $\U_{-q}(\g')$-module.
Also any highest weight $\U_{-q}(\g')$-module $V'$ with a highest weight vector $v'_0$ can be lifted to a
$\fU_{-q}(\g')$-module by endowing $\C(q^{1/2}) v'_0$ with a  $G'$-module structure (there are many possibilities).
The same discussion applies to $\fU_q(\g)$- and $\U_q(\g)$-modules.

Assume that $V'$ is a simple $\fU_{-q}(\g')$-module generated by a $\U_{-q}(\g')$ highest weight vector $v'_0$ such that $\C(q^{1/2}) v'_0$ is the $1$-dimensional trivial $G'$-module. Then $V'$ is finite dimensional if and only if
it is at level $0$ and the scalars $\up'^{\pm}_{i,r}$ (cf. \eqref{eq:V'-hwv}) satisfy the given condition of Proposition \ref{prop-fin.dim} for some monic polynomials $Q_i$ with $t^{1/2}=\sqrt{-1}q^{1/2}$.
By Proposition \ref{prop:corr},  $V'$ naturally admits the $\fU_q(\g)$-action
\[
\fU_q(\g)\otimes V' \longrightarrow V', \quad x\otimes V'\mapsto \varphi(x)v', \quad \forall x\in \fU_q(\g), \ v'\in V'.
\]
It restricts to a simple highest weight $\U_q(\g)$-module at level $0$ such that
\[
\hh^{\pm}_{i,r}\cdot v'_0=\up^{\pm}_{i,r}v'_0, \
\text{\ \ with\ \ } \up^{+}_{i,r}=(-1)^{(n-i)r\epsilon}\up'^{+}_{i,r}.
\]
Clearly, the $\up^{+}_{i,r}$ satisfy the condition given in Theorem \ref{theo-finite module}.

As a $\fU_q(\g)$-module, $V'$ is naturally $\Z_2$-graded.
Recall from \cite{XZ} that there exists an element $u\in G$ which is the grading operator in the sense that $u x u^{-1} = (-1)^{[x]} x$ for all homogeneous $x\in\Uq(\g)$. The even and odd subspaces of $V'$ are then the $\pm 1$-eigenspaces of $u$.

The above arguments go through in the opposite direction, i.e., from
$\fU_q(\g)$-modules to $\fU_{-q}(\g')$-modules.  This proves Theorem \ref{theo-finite module}.


\section*{Acknowledgements}
This research was supported by National Natural Science Foundation of China Grants No. 11301130 and No. 11431010,
and Australian Research Council Discovery-Project Grants DP140103239 and DP170104318.

\end{document}